\newtheorem{theorem}{Theorem}[section]
\newtheorem{lemma}[theorem]{Lemma}
\newtheorem{prop}[theorem]{Proposition}
\newtheorem{corollary}[theorem]{Corollary}
\theoremstyle{definition}
\newtheorem{definition}[theorem]{Definition}
\theoremstyle{remark}
\newtheorem{remark}[theorem]{Remark}
\newtheorem{conj}[theorem]{Conjecture}
\numberwithin{equation}{section}
\begin{document}

%-------------------------------------------------------------------------
% editorial commands: to be inserted by the editorial office
%
%\firstpage{1} \volume{228} \Copyrightyear{2004} \DOI{003-0001}
%
%
%\seriesextra{Just an add-on}
%\seriesextraline{This is the Concrete Title of this Book\br H.E. R and S.T.C. W, Eds.}
%
% for journals:
%
%\firstpage{1}
%\issuenumber{1}
%\Volumeandyear{1 (2004)}
%\Copyrightyear{2004}
%\DOI{003-xxxx-y}
%\Signet
%\commby{inhouse}
%\submitted{March 14, 2003}
%\received{March 16, 2000}
%\revised{June 1, 2000}
%\accepted{July 22, 2000}
%
%
%
%---------------------------------------------------------------------------
%Insert here the title, affiliations and abstract:
%

\title[$\text{PDO}_t(n)$ modulo powers of $2$ and $3$]
 {Congruences for the partition function $\text{PDO}_t(n)$ modulo powers of $2$ and $3$}

\author{Gurinder Singh}
\address{Department of Mathematics, Indian Institute of Technology Guwahati, Assam, India, PIN- 781039}
\email{gurinder.singh@iitg.ac.in}

\author{Rupam Barman}
\address{Department of Mathematics, Indian Institute of Technology Guwahati, Assam, India, PIN- 781039}
\email{rupam@iitg.ac.in}

\date{July 10, 2023}

%\subjclass{Primary 11P83, 11F11}
\subjclass[2010]{Primary 05A17, 11P83, 11F11}

\keywords{Partitions with designated summands; Tagged parts; Eta-quotients; Modular forms; Congruence; Dissection formula}

%----------additions
\dedicatory{}
%%% ----------------------------------------------------------------------

\begin{abstract}  Lin introduced the partition function $\text{PDO}_t(n)$, which counts the total number of tagged parts over all the partitions of $n$ with designated summands in which all parts are odd. For $k\geq0$, Lin conjectured congruences for $\text{PDO}_t(8\cdot3^kn)$ and $\text{PDO}_t(12\cdot3^kn)$ modulo $3^{k+2}$. In this article, we develop a new approach to study these congruences. We study the generating functions of $\text{PDO}_t(8\cdot3^kn)$ and $\text{PDO}_t(12\cdot3^kn)$ modulo $3^{k+3}$ for certain values of $k$. We also study $\text{PDO}_t(n)$ modulo powers of $2$. We establish infinitely many congruences for $\text{PDO}_t(n)$ modulo $8$ and $32$. We prove several congruences modulo small powers of $2$ and discuss the existence of congruences modulo arbitrary powers of $2$ similar to those in Lin's conjecture. In reference to this, we also pose some problems for future work.
\end{abstract}
%%% ----------------------------------------------------------------------
\maketitle
%%% ----------------------------------------------------------------------
%\tableofcontents
\section{Introduction and statement of results} 
%%%%%%%%%%%%%%%%%%%%%%%%%%%%
A partition of a positive integer $n$ is a non-increasing sequence of positive integers, called parts, whose sum is $n$. In \cite{andrews2002}, Andrews, Lewis, and Lovejoy introduced partitions with designated summands. A partition with designated summands of $n$ is obtained from an ordinary partition by tagging exactly one of each part size. The number of such partitions of $n$ is denoted by $\text{PD}(n)$. For example, $\text{PD}(4)=10$ with the relevant partitions being $4',\ 3'+1',\ 2'+2,\ 2+2',\ 2'+1'+1,\ 2'+1+1',\ 1'+1+1+1,\ 1+1'+1+1,\ 1+1+1'+1,\ 1+1+1+1'$. They also studied another partition function $\text{PDO}(n)$ which counts the number of partitions of $n$ with designated summands in which all parts are odd. From the above example, $\text{PDO}(4) = 5$. Later, many
authors have studied these two partition functions (see for example \cite{Baruah2015, chen_ji_2013, xia_2016}). Recently, Lin \cite{Lin2018} introduced two new partition functions $\text{PD}_t(n)$ and $\text{PDO}_t(n)$ related to partitions with designated summands. The partition function $\text{PD}_t(n)$ counts the total number of tagged parts over all the partitions of $n$ with designated summands. For instance, $\text{PD}_t(4) = 13$. The other partition function $\text{PDO}_t(n)$ counts the total
number of tagged parts over all the partitions of $n$ with designated summands in which all parts are odd. For example, $\text{PDO}_t(4) = 6$. Lin found the generating functions of $\text{PD}_t(n)$ and $\text{PDO}_t(n)$. The generating function of $\text{PDO}_t(n)$ is given by \cite[eq. (1.5)]{Lin2018}:
\begin{align}\label{gen_fn_PDOt}
\sum_{n=0}^{\infty}\text{PDO}_t(n)q^n=q\frac{f_2f_3^2f_{12}^2}{f_1^2f_6},
\end{align}
where, for any positive integer $m$, $f_m:=\prod_{j=1}^{\infty}(1-q^{jm})$.
\par Lin \cite{Lin2018} established many congruences modulo small powers of 3 satisfied by $\text{PD}_t(n)$ and $\text{PDO}_t(n)$. For example, he proved the following Ramanujan-type congruences modulo $9$ and $27$ satisfied by $\text{PDO}_t(n)$: For $n\geq0$,
\begin{align}
\label{conj_Lin_0}\text{PDO}_t(8n)&\equiv \text{PDO}_t(12n)\equiv \text{PDO}_t(12n+8)\equiv 0 \pmod{9},\\
\label{conj_Lin_1} \text{PDO}_t(24n)&\equiv \text{PDO}_t(36n)\equiv \text{PDO}_t(36n+24)\equiv 0 \pmod{27}.	
\end{align}
Lin also derived infinitely many congruences for $\text{PDO}_t(n)$ modulo $9$ and $27$. If $p\geq5$ is a prime with $\left(\frac{-3}{p}\right)=-1$, for $n\geq0$ and $1\leq k\leq p-1$, he proved that 
\begin{align*}
\text{PDO}_t(24p^2n+24kp+4p^2)&\equiv0\pmod{9},\\
\text{PDO}_t(72p^2n+72kp+12p^2)&\equiv0\pmod{27}.
\end{align*} 
He further conjectured the following congruences.
\begin{conj}\cite[Conjecture 6.1]{Lin2018}\label{conj_Lin}
For $k,n\geq 0$,
\begin{align}\label{lin_conj1}
\text{PDO}_t(8\cdot 3^k n)&\equiv 0 \pmod{3^{k+2}},\\
\text{PDO}_t(12\cdot 3^k n)&\equiv 0\pmod{3^{k+2}}.\label{lin_conj2}
\end{align}
\end{conj}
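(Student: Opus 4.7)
The plan is to prove both congruences \eqref{lin_conj1} and \eqref{lin_conj2} by induction on $k$ using the $U_3$-operator $U_3\bigl(\sum c_n q^n\bigr) := \sum c_{3n}\,q^n$. Setting
$$A_k(q) := \sum_{n \geq 0} \text{PDO}_t(8 \cdot 3^k n)\, q^n, \qquad B_k(q) := \sum_{n \geq 0} \text{PDO}_t(12 \cdot 3^k n)\, q^n,$$
we have the iteration $A_{k+1} = U_3(A_k)$ and $B_{k+1} = U_3(B_k)$, and the base case $k = 0$ is exactly Lin's congruences \eqref{conj_Lin_0}. The task therefore reduces to showing that at each step the operator $U_3$ improves the $3$-adic valuation of these series by exactly one.

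To set up the induction, I would first extract closed eta-quotient expressions for $A_0(q)$ and $B_0(q)$ from \eqref{gen_fn_PDOt}. The standard $2$-dissection of $1/f_1^2$ in terms of $f_2, f_4, f_8, f_{16}$, followed by a second dissection, isolates the residues $n \equiv 0 \pmod 8$ and $n \equiv 0 \pmod{12}$ on the right-hand side. With these expressions in hand, the critical structural step is to identify a small finitely generated $\mathbb{Z}$-submodule $\mathcal{M} \subset \mathbb{Z}[[q]]$, spanned by a handful of eta-quotients $T_1(q), \ldots, T_m(q)$, together with two ``carrier'' eta-quotients $\Phi_A(q), \Phi_B(q)$ dictated by the dissections above, having the twin properties
\begin{align*}
&\textup{(a)}\quad A_0, B_0 \in 9\cdot \mathcal{M},\\
&\textup{(b)}\quad U_3\bigl(\Phi_A(q)\, T_i(q)\bigr) = 3\sum_{j=1}^{m} c_{ij}^A(q)\, T_j(q), \quad U_3\bigl(\Phi_B(q)\, T_i(q)\bigr) = 3\sum_{j=1}^{m} c_{ij}^B(q)\, T_j(q),
\end{align*}
with $c_{ij}^A, c_{ij}^B \in \mathbb{Z}[[q]]$. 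The explicit factor of $3$ appearing in (b) is the engine of the induction: from $A_k, B_k \in 3^{k+2}\cdot \mathcal{M}$ it forces $A_{k+1}, B_{k+1} \in 3^{k+3}\cdot \mathcal{M}$.

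The main obstacle is establishing the $U_3$-identities in (b) together with the enforced extra factor of $3$. In practice I would derive them from precise $3$-dissections of $f_1, f_2, f_3, f_6, f_{12}$ coming from the Ramanujan-type identity $f_1^3 = \sum_{n\geq 0}(-1)^n(2n+1)q^{n(n+1)/2}$ and its standard consequences for $f_3/f_1$ and $1/f_1^2$, then sieving out the exponents divisible by $3$ and collecting terms. Equivalently, both sides of each candidate identity can be recognised as modular forms of a common weight on a congruence subgroup $\Gamma_0(N)$, and agreement verified up to the Sturm bound; the divisibility by $3$ of the $c_{ij}^{A,B}$ can then be read off from the constant of proportionality. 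The delicate point is to choose $\mathcal{M}$ large enough to be closed under $U_3$ in this strong sense, yet small enough for (a) to be checked by direct dissection. Once these ingredients are in place, the induction closes and gives Conjecture \ref{conj_Lin} for all $k \geq 0$.
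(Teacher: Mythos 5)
The statement you are addressing is stated in the paper as a conjecture and is \emph{not} proved there: the paper only establishes the cases $k\le 2$ (via Theorems \ref{Thm3} and \ref{Thm4}, which pin down the relevant generating functions modulo $3^{k+3}$ for $k=1,2$), and proves the conditional reduction (Theorem \ref{Thm5}) that the stronger Conjecture \ref{conj2} implies Lin's conjecture. So a complete argument from you would have to go beyond the paper. Your proposed route --- a Watson-style induction in which a finitely generated module $\mathcal{M}$ of eta-quotients is stable under $U_3$ and each application of $U_3$ produces an explicit extra factor of $3$ --- is a sensible strategy, but as written it has a genuine gap at exactly the decisive step: properties (a) and (b) are postulated, not established. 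You never exhibit the generators $T_1,\dots,T_m$, the carriers $\Phi_A,\Phi_B$, or a single one of the $U_3$-identities in (b), and the assertion that every structure constant $c_{ij}^{A},c_{ij}^{B}$ carries a factor of $3$ is precisely the content of the conjecture. Appealing to ``verification up to the Sturm bound'' does not close the gap, because the relevant weight (and hence the bound) grows with $k$ (the paper needs weight $3^{k+2}+1$ at level $18$ for its $k=2$ verification), so any finite computation settles only finitely many $k$; to get all $k$ simultaneously you must actually prove the closure of $\mathcal{M}$ under $U_3$ with the uniform gain of $3$, and no argument for that is offered.

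There are also two structural problems with the induction as set up. First, the iteration is $A_{k+1}=U_3(A_k)$, but your identity (b) concerns $U_3\bigl(\Phi_A T_i\bigr)$ while its right-hand side $3\sum_j c_{ij}^A T_j$ carries no factor $\Phi_A$; after one application the hypothesis of (b) no longer applies to $A_{k+1}$, so the induction does not close in the form stated (you would need $\Phi_A\mathcal{M}$ to be mapped into $3\,\Phi_A\mathcal{M}$, or $\mathcal{M}$ itself to be $U_3$-stable with the factor of $3$). Second, the paper's own numerical evidence (Conjecture \ref{conj2}) shows the leading constants alternating with the parity of $k$ (the factor $2^{\alpha_k}$ with $\alpha_k=2k+3$ for odd $k$ and $\alpha_k=0$ for even $k$), which is a warning that any correct iteration likely has a period-two structure rather than a uniform one-step gain of a single factor of $3$; compare the classical situation for $p(n)$ modulo powers of $7$, where the $U_7$-iteration gains a power of $7$ only every second step. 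In short, your proposal describes a plan for a proof rather than a proof, and the missing ingredient is exactly the open problem.
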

The congruences \eqref{conj_Lin_0} and \eqref{conj_Lin_1} establish Conjecture \ref{conj_Lin} for $k=0,1$, respectively. We, along with Ajit Singh \cite{Barman2023}, further proved this conjecture for $k=2$. The conjecture is open for general $k$.
\par The partition function $\text{PDO}_t(n)$ has also been studied modulo small powers of $2$. Lin conjectured and Baruah and Kaur \cite{Baruah2020} proved the following congruences modulo $8$: For $n\geq0$, 
\begin{align}\label{Baruah_cong}
\text{PDO}_t(8n+6)\equiv \text{PDO}_t(8n+7)\equiv 0\pmod 8.
\end{align}
Using $2$- and $3$-dissections of certain $q$-products, Vandna and Kaur \cite{V_kaur} also proved several congruences modulo $8$, $16$, $48$, $144$, $288$, $1152$, $6912$. For example, for $n\geq0$,
\begin{align*}
\text{PDO}_t(12n+6)\equiv \text{PDO}_t(48n+38)&\equiv 0\pmod{16},\\
\text{PDO}_t(32n+12)\equiv \text{PDO}_t(96n+76)&\equiv 0\pmod{48},\\
\text{PDO}_t(48n+44)\equiv \text{PDO}_t(96n+60)&\equiv 0\pmod{144}.
\end{align*} 
In \cite[Theorem 1.4]{Barman2023}, jointly with Ajit Singh, using the work of Ono and Taguchi \cite{ono2005} on nilpotency of Hecke operators we proved  that there exist infinitely many congruences modulo arbitrary powers of $2$. To be specific, we proved that there exists an integer $s\geq0$ such that for every $u\geq1$ and distinct primes $p_1,\ldots, p_{s+u}$ coprime to $6$,
\begin{align*}
\text{PDO}_t\left(p_1\cdots p_{s+u}\cdot n \right)\equiv 0\pmod{2^{u}}
\end{align*}
for all nonnegative integer $n$ coprime to $p_1, \ldots, p_{s+u}$. This left the case for the primes $2$ and $3$. In the following theorem, we derive infinitely many congruences for $\text{PDO}_t(n)$ modulo $8$ and $32$, which involve the primes $2$ and $3$. 
\begin{theorem}\label{Thm1}
For any prime $p\equiv-1\pmod{6}$ and $n\geq 0$, we have
\begin{align}\label{Thm1_1}
\emph{PDO}_t\left(6p^2n+6kp+3p^2\right)&\equiv0\pmod{8},\\ \emph{PDO}_t\left(24p^2n+24kp+12p^2\right)&\equiv0\pmod{32},\label{Thm1_2}
\end{align}
where $k=1,2,\ldots, p-1$.
\end{theorem}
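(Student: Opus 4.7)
The plan is to follow the same general strategy that succeeds for Lin's analogous congruences modulo $9$ and $27$: for each of the two claimed relations, I would extract the generating function of $\text{PDO}_t$ along the underlying arithmetic progression, reduce it modulo the relevant power of $2$, and then apply a $p$-dissection argument that forces the vanishing on the desired residue classes for primes $p\equiv-1\pmod{6}$.

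Observe that $6p^{2}n+6kp+3p^{2}=6\bigl(p^{2}n+kp+(p^{2}-1)/2\bigr)+3$ and $24p^{2}n+24kp+12p^{2}=24\bigl(p^{2}n+kp+(p^{2}-1)/2\bigr)+12$, so both target congruences reduce to showing that the coefficient of $q^{p^{2}n+kp+(p^{2}-1)/2}$ vanishes to the appropriate $2$-adic order in
\[
A(q):=\sum_{n\geq 0}\text{PDO}_t(6n+3)\,q^{n}\pmod{8}\quad\text{and}\quad B(q):=\sum_{n\geq 0}\text{PDO}_t(24n+12)\,q^{n}\pmod{32},
\]
respectively. Starting from \eqref{gen_fn_PDOt}, I would obtain manageable forms for $A(q)$ and $B(q)$ by a $2$-dissection of $qf_{2}f_{3}^{2}f_{12}^{2}/(f_{1}^{2}f_{6})$ that picks out the odd-index coefficients, followed by a $3$-dissection of the resulting series (equivalently, a $6$-dissection applied directly). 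The standard binomial reductions $f_{1}^{2}\equiv f_{2}\pmod{2}$, $f_{1}^{4}\equiv f_{2}^{2}\pmod{4}$, $f_{1}^{8}\equiv f_{2}^{4}\pmod{8}$ together with well-known $2$-dissection formulae for $1/f_{1}^{2}$, $\psi(q)=f_{2}^{2}/f_{1}$, and $\phi(-q)=f_{1}^{2}/f_{2}$ would be used throughout to simplify.

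I expect the outcome to be a representation of $A(q)$ and $B(q)$ modulo $8$ and $32$ respectively as a small linear combination of eta-quotients that, via Jacobi's triple product, can be written as theta series of the shape $\sum_{x,y\in\mathbb{Z}}\varepsilon(x,y)\,q^{Q(x,y)+\alpha}$, where $Q$ is a positive-definite binary quadratic form whose discriminant is a multiple of $-3$. Since $\bigl(\tfrac{-3}{p}\bigr)=-1$ for $p\equiv-1\pmod{6}$, the congruence $Q(x,y)\equiv 0\pmod{p}$ forces $p\mid x$ and $p\mid y$, so $p^{2}\mid Q(x,y)$. The target index $p^{2}n+kp+(p^{2}-1)/2$, after the constant shift induced by $\alpha$, is $\equiv kp\pmod{p^{2}}$, hence divisible by $p$ but not $p^{2}$ for $1\leq k\leq p-1$. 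This incompatibility forces the coefficient of the theta series on the target index to vanish, which combined with the explicit $2$-adic factor sitting in front of the theta series yields \eqref{Thm1_1} and \eqref{Thm1_2}.

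The main obstacle will be the dissection step: carrying out the $2$- and $3$-dissections and simplifying the resulting eta-quotient cleanly modulo $2^{3}$, and especially modulo $2^{5}$, requires careful bookkeeping of cross terms arising from the expansions of $1/f_{1}^{2}$ and $1/f_{6}$. The modulo $32$ case in particular will demand tracking several intermediate congruences, and we may need to combine more than one theta-series summand to absorb the higher-order terms. Once $A(q)$ and $B(q)$ have been reduced to the required theta-series form, the $p$-dissection step itself is the standard Legendre-symbol argument that also underlies Lin's proofs of the analogous modulo $9$ and $27$ congruences quoted earlier in the introduction.
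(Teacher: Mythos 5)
Your plan coincides with the paper's own proof: both generating functions reduce to $4f_1^3f_3^3\pmod{8}$ and $16f_1^3f_3^3\pmod{32}$ respectively (via Lin's formula $\sum_{n\geq0}\text{PDO}_t(3n)q^n\equiv4qf_2^3f_6^3\pmod{8}$, the generating function \eqref{genfn_6n} of $\text{PDO}_t(6n)$, and the $2$-dissection \eqref{1.1} of $f_1f_3$), after which Jacobi's identity $f_1^3=\sum_{n\geq0}(-1)^n(2n+1)q^{n(n+1)/2}$ yields precisely the theta series attached to $x^2+3y^2$ and the Legendre-symbol argument you describe, using $\left(\frac{-3}{p}\right)=-1$ for $p\equiv-1\pmod{6}$, finishes the proof exactly as you outline. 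The dissection step you flag as the main obstacle is in fact short: a single eta-quotient term suffices in each case, with no cross terms to absorb.
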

We obtain the following corollary to Theorem \ref{Thm1}.
\begin{corollary}\label{Thm1_Cor}
For any prime $p\equiv-1\pmod{6}$, and $n\geq0$, $\ell\geq1$
\begin{align}\label{Thm1_Cor1}
\emph{PDO}_t\left(3^{\ell}(6p^2n+6kp+3p^2)\right)&\equiv0\pmod{8},\\ \emph{PDO}_t\left(3^{\ell}(24p^2n+24kp+12p^2)\right)&\equiv0\pmod{32},\label{Thm1_Cor2}
\end{align}
where $k=1,2,\ldots, p-1$.
\end{corollary}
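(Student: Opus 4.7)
The plan is to derive Corollary \ref{Thm1_Cor} directly from Theorem \ref{Thm1} by exploiting a closure property of the arithmetic progressions involved. First, I factor the arguments in Theorem \ref{Thm1} as
\begin{align*}
6p^2n + 6kp + 3p^2 &= 3p \cdot M, \\
24p^2n + 24kp + 12p^2 &= 12p \cdot M,
\end{align*}
where $M := 2pn + 2k + p$. A short computation of $M \pmod{2p}$ shows that, for $k \in \{1, \ldots, p-1\}$, the values $2k + p$ reduce modulo $2p$ to precisely the set of odd residues in $\{1, 3, \ldots, 2p-1\} \setminus \{p\}$. Hence, as $n$ ranges over the nonnegative integers and $k$ over $\{1, \ldots, p-1\}$, the integer $M$ runs through exactly the positive odd integers coprime to $p$ that satisfy $M \geq p + 2$.

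Next, I would verify that multiplication by $3^\ell$ (for $\ell \geq 1$) preserves this set: since $p \equiv -1 \pmod{6}$ forces $p \geq 5$ and $\gcd(3, p) = 1$, the integer $M' := 3^\ell M$ is still odd, still coprime to $p$, and satisfies $M' \geq 3(p+2) \geq p + 2$. Consequently, $M'$ admits a representation $M' = 2p n' + 2 k' + p$ for some $n' \geq 0$ and $k' \in \{1, \ldots, p-1\}$, and this yields
\begin{align*}
3^\ell(6 p^2 n + 6 k p + 3 p^2) &= 3 p M' = 6 p^2 n' + 6 k' p + 3 p^2,\\
3^\ell(24 p^2 n + 24 k p + 12 p^2) &= 12 p M' = 24 p^2 n' + 24 k' p + 12 p^2.
\end{align*}
Applying \eqref{Thm1_1} and \eqref{Thm1_2} of Theorem \ref{Thm1} to the pair $(n', k')$ then gives \eqref{Thm1_Cor1} and \eqref{Thm1_Cor2}, respectively.

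There is essentially no substantive obstacle in this approach; the corollary is a reformulation of Theorem \ref{Thm1} that uses the stability of the relevant residue classes under multiplication by powers of $3$. The only point demanding care is the enumeration of $\{2k+p \pmod{2p} : k = 1, \ldots, p-1\}$, a finite direct check that guarantees $M$ traverses all odd integers $\geq p+2$ coprime to $p$, and hence that the family of arguments in Theorem \ref{Thm1} is closed under the map $x \mapsto 3^\ell x$.
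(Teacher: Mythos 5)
Your proof is correct, but it proceeds quite differently from the paper's. You treat the corollary as a purely arithmetic consequence of Theorem \ref{Thm1}: writing the argument as $3pM$ (resp.\ $12pM$) with $M=2pn+2k+p$, you check that the admissible $M$ are exactly the odd integers $M\geq p+2$ with $p\nmid M$, and that this set is stable under $M\mapsto 3^{\ell}M$ because $p\equiv -1\pmod 6$ forces $\gcd(3,p)=1$; your enumeration of the residues $2k+p\pmod{2p}$ and the resulting surjectivity onto that set are both right. The paper instead proves new generating-function congruences: applying the $3$-dissection \eqref{1.2} to \eqref{3.1} and \eqref{3.11} it shows $\mathrm{PDO}_t(3n)\equiv\mathrm{PDO}_t(9n)\pmod 8$ and $\mathrm{PDO}_t(12n)\equiv\mathrm{PDO}_t(36n)\pmod{32}$, hence by induction $\mathrm{PDO}_t(3n)\equiv\mathrm{PDO}_t(3^{\ell}n)\pmod 8$ and $\mathrm{PDO}_t(12n)\equiv\mathrm{PDO}_t(3^{\ell}\cdot 12n)\pmod{32}$ for all $n$, and then specializes $n$ to the progressions of Theorem \ref{Thm1}. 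The trade-off: the paper's route costs an extra $q$-series computation but yields the stronger standalone congruences \eqref{3.16} and \eqref{3.19}, valid for every $n$; your route needs no new identities and exposes why the family of progressions in Theorem \ref{Thm1} is intrinsically closed under multiplication by powers of $3$, but it produces only the stated corollary and would not transfer to a situation where the relevant residue classes were not $3$-stable.
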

Also, using Radu's approach \cite{radu1, radu2}, we derive several congruences for $\text{PDO}_t(n)$ modulo different powers of $2$.
\begin{theorem}\label{Thm2}
For all $n\geq0$, we have.
\begin{align} \label{0.00}
\emph{PDO}_t(6n)&\equiv0\pmod{8},\\ \label{0.01}
\emph{PDO}_t(12n)&\equiv0\pmod{16},\\ \label{0.02}
\emph{PDO}_t(24n)&\equiv0\pmod{32},\\ \label{0.03}
\emph{PDO}_t(48n)&\equiv0\pmod{64},\\ \label{0.04}
\emph{PDO}_t(96n)&\equiv0\pmod{128},\\ \label{0.05}
\emph{PDO}_t(192n)&\equiv0\pmod{256},\\ \label{0.06}
\emph{PDO}_t(6n+3)&\equiv0\pmod{4},\\ \label{0.07}
\emph{PDO}_t(12n+6)&\equiv0\pmod{8},\\ \label{0.08}
\emph{PDO}_t(24n+12)&\equiv0\pmod{16},\\ \label{0.09}
\emph{PDO}_t(48n+24)&\equiv0\pmod{32},\\ \label{0.010}
\emph{PDO}_t(96n+48)&\equiv0\pmod{64},\\ \label{0.011}
\emph{PDO}_t(192n+96)&\equiv0\pmod{128},\\ \label{0.012}
\emph{PDO}_t(12n+3)\equiv\emph{PDO}_t(12n+9)&\equiv0\pmod{4},\\ \label{0.013}
\emph{PDO}_t(24n+6)\equiv\emph{PDO}_t(24n+18)&\equiv0\pmod{8},\\ \label{0.014}
\emph{PDO}_t(48n+12)\equiv\emph{PDO}_t(48n+36)&\equiv0\pmod{16},\\ \label{0.015}
\emph{PDO}_t(96n+24)\equiv\emph{PDO}_t(96n+72)&\equiv0\pmod{32},\\ \label{0.016}
\emph{PDO}_t(192n+48)\equiv\emph{PDO}_t(192n+144)&\equiv0\pmod{64}. 
\end{align}
\end{theorem}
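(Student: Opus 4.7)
The plan is to prove each of \eqref{0.00}--\eqref{0.016} by invoking the algorithmic framework of Radu \cite{radu1, radu2} for partition congruences, applied to the eta-quotient form of \eqref{gen_fn_PDOt}. The underlying principle is that for an eta-quotient expansion $\sum_{n\geq 0} a(n) q^n = q^{s/24}\prod_{\delta\mid N}\eta(\delta z)^{r_\delta}$, a congruence $a(An+B)\equiv 0\pmod{M}$ translates, after multiplication by a suitable auxiliary eta-quotient, into a statement about vanishing modulo $M$ of the Fourier coefficients of a modular function on some $\Gamma_0(N')$. Radu's main theorem then reduces the claim to a finite verification up to the Sturm bound of the associated space.

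For each target congruence $\text{PDO}_t(An+B)\equiv 0 \pmod{2^r}$, I would proceed in four steps: (i) rewrite \eqref{gen_fn_PDOt} as $q\cdot \eta(2z)\eta(3z)^{2}\eta(12z)^{2}/(\eta(z)^{2}\eta(6z))$; (ii) feed the data $(M, A, B, (r_\delta))$ into Radu's algorithm and verify its admissibility conditions (the $\Delta^*$-test, the Newman-type conditions $\sum \delta r_\delta \equiv 0\pmod{24}$ and $\sum (N/\delta) r_\delta \equiv 0\pmod{24}$, and the integrality and non-negativity of the associated weight); (iii) compute the resulting level $N'$, weight $k$, and Sturm bound $S = \tfrac{k}{12}[\mathrm{SL}_2(\mathbb{Z}):\Gamma_0(N')]$; and (iv) check the congruence for $n \leq S$ on the truncated $q$-expansion of the associated modular form.

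The congruences in Theorem \ref{Thm2} fall naturally into three towers: \eqref{0.00}--\eqref{0.05} for $\text{PDO}_t(6\cdot 2^j n)$ with modulus $2^{j+3}$; \eqref{0.07}--\eqref{0.011} extending \eqref{0.06} for $\text{PDO}_t(6\cdot 2^j n + 3 \cdot 2^j)$; and \eqref{0.013}--\eqref{0.016} extending \eqref{0.012} for the pair $\text{PDO}_t(12\cdot 2^j n + 3\cdot 2^j)$ and $\text{PDO}_t(12\cdot 2^j n + 9\cdot 2^j)$. Within each tower the modulus doubles in lockstep with the progression, which strongly suggests that one may amplify divisibility by one power of $2$ by taking the $2$-dissection of the previously established eta-quotient identity and isolating the even-index subseries. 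Accordingly, I would treat the three base cases \eqref{0.00}, \eqref{0.06}, \eqref{0.012} by direct appeal to Radu's algorithm and then bootstrap the remaining cases via a uniform $2$-dissection lemma.

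The main obstacle is that the level $N'$ and Sturm bound $S$ grow rapidly with $A$, so the cases $A = 192$ with moduli $128$ or $256$ demand substantial computation on $\Gamma_0(N')$ for $N'$ a multiple of $192$. The key to keeping these tractable is a judicious choice of the auxiliary eta-quotient in step (ii): a poor choice inflates the weight and makes the Sturm bound impractical, whereas a carefully chosen one can keep the verification within a few dozen coefficients. Selecting these auxiliaries \emph{systematically} across the three towers, rather than ad hoc for each individual congruence, will be the principal technical challenge of the proof.
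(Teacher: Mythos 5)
Your core tool --- Radu's algorithm applied to the eta-quotient form of \eqref{gen_fn_PDOt} --- is the same one the paper uses, and for your three ``base cases'' your plan coincides with the paper's proof. The genuine gap is the bootstrap. The paper does \emph{not} derive the higher members of each tower from the base cases by $2$-dissection; it runs Radu's criterion separately on every one of the progressions, always with $M=N=12$ and $r=(-2,1,2,0,-1,2)$ (after shifting the index via $a(n):=\text{PDO}_t(n+1)$, which absorbs the leading $q$ in \eqref{gen_fn_PDOt}; note that your step (i) carries a spurious extra factor of $q$), together with the auxiliary choice $r'=(5m/6,0,0,0,0,0)$. Only the modulus $m$ of the progression grows, not the level $N$, and the resulting bounds $\lfloor\nu\rfloor$ stay below $156$ even for $m=192$, so the computation you are trying to avoid is entirely tractable and your stated worry about impractical Sturm bounds does not materialize.

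By contrast, your ``uniform $2$-dissection lemma'' is never stated, and it cannot be as simple as you suggest. Knowing $\text{PDO}_t(6\cdot 2^{j}n)\equiv 0\pmod{2^{j+3}}$ for all $n$ gives no extra information about the even-indexed subsequence; to gain a power of $2$ at each step you need an explicit generating-function identity modulo $2^{j+4}$ for $\sum\text{PDO}_t(6\cdot 2^{j}n)q^{n}$, and producing it requires redoing the dissection from the top modulo a higher power of $2$ each time, with rapidly proliferating terms (compare the single step \eqref{3.9}--\eqref{3.11}, which already needs the identity \eqref{1.1} and only reaches modulus $32$). Indeed, the infinite continuation of your three towers is precisely the content of the paper's Conjecture \ref{conj1}, which the authors leave open; if a uniform one-power-per-dissection lemma were available, that conjecture would follow. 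As written, your plan therefore proves only the base cases \eqref{0.00}, \eqref{0.06}, \eqref{0.012}. The fix is simply to drop the bootstrap and apply Radu's lemma (with Lemma \ref{lem2} supplying the double-coset representatives for $\Gamma_0(12)$) directly to each remaining progression.
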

We remark that the congruences \eqref{0.00} and \eqref{0.01} can also be derived from \cite[p. 14]{Lin2018}
\begin{align}\label{genfn_6n}
\sum_{n=0}^{\infty}\text{PDO}_t(6n)q^n=16q\frac{f^4_2f^3_3f^4_4}{f^9_1}.
\end{align}
Also, $\text{PDO}_t(24n+6)\equiv0\pmod{8}$ can be seen from \eqref{Baruah_cong}. The congruences \eqref{0.07} and \eqref{0.012} are proved in \cite{V_kaur} using basic $q$-series techniques. Our main motive of collectively writing and proving congruences \eqref{0.00}-\eqref{0.016} is to look for the pattern for $\text{PDO}_t(n)$ modulo powers of $2$ and check the existence of congruences modulo powers of $2$ similar to \eqref{lin_conj1} and \eqref{lin_conj2}. We conjecture some congruences modulo arbitrary powers of $2$ based on the congruences obtained in Theorem \ref{Thm2}.
\begin{conj}\label{conj1}
For $k, n\geq0$,
\begin{align*}
\text{PDO}_t\left(3\cdot2^kn\right)&\equiv0\pmod{2^{k+2}},\\
\text{PDO}_t\left(3\cdot2^{k+1}n+3\cdot2^{k}\right)&\equiv0\pmod{2^{k+2}},\\
\text{PDO}_t\left(3\cdot2^{k+2}n+3\cdot2^{k}\right)&\equiv0\pmod{2^{k+2}},\\
\text{PDO}_t\left(3\cdot2^{k+2}n+3^2\cdot2^{k}\right)&\equiv0\pmod{2^{k+2}}.
\end{align*}
\end{conj}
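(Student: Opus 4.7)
The plan is to establish Conjecture \ref{conj1} by induction on $k$, using the modular-forms machinery Watson and Atkin developed for Ramanujan's congruences $p(5^k n+\delta_k)\equiv 0\pmod{5^k}$ and parallel to the approach one would take for Lin's still-open Conjecture \ref{conj_Lin}. Theorem \ref{Thm2} already supplies base cases up through $k=6$, so only the propagation step must be argued.

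\textbf{Setup and operator viewpoint.} For each of the four families in Conjecture \ref{conj1}, let $A_k(q)$ denote the appropriate generating series. Passage from $k$ to $k+1$ in any of these families corresponds, after a multiplicative eta-quotient twist, to a single application of the Atkin--Lehner operator
\begin{align*}
U_2\!\left(\sum a(n) q^n\right) := \sum a(2n) q^n.
\end{align*}
Thus one is reduced to tracking how $U_2$ acts on a suitable space of eta-quotients on $\Gamma_0(N)$ (with $N\mid 24$), and how its matrix entries behave $2$-adically.

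\textbf{Inductive strategy.} I would search for a finitely generated $\mathbb{Z}$-module $\mathcal{M}$ of weakly holomorphic eta-quotients (built from $f_1,f_2,f_3,f_4,f_6,f_8,f_{12},f_{24}$) and a fixed eta-quotient prefactor $h(q)$ with the following two properties. First, the series $q\,f_2^4 f_3^3 f_4^4/f_1^9$ from \eqref{genfn_6n}, together with the three analogous starting series for the other residue classes, all lie in $\mathcal{M}$. Second, the twisted operator $F\mapsto h^{-1}\,U_2(hF)$ sends $\mathcal{M}$ into $2\mathcal{M}$. Combined with the factor $16=2^4$ already present in \eqref{genfn_6n}, these two properties give by induction $A_k\in 2^{k+2}\,\mathcal{M}$, which is the desired conclusion. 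Concretely, one expands each generator $g_j=q^{a_j}\prod_m f_m^{b_{j,m}}$, applies $U_2$ using the classical $2$-dissections of $f_1,f_2,f_3,f_4,f_6,f_{12}$ (the same toolkit used in \cite{Barman2023,V_kaur}), re-expresses the result as a $\mathbb{Z}$-linear combination of the chosen basis, and reads off a matrix $M$ whose entries one must show are uniformly even (possibly after a preliminary change of basis that block-triangulates $M\pmod 2$).

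\textbf{Main obstacle.} The heart of the difficulty, exactly as in the analogous problem modulo $3^{k+2}$, is locating the correct module $\mathcal{M}$ and prefactor $h$. The module must be small enough that $U_2$ acts by a matrix with uniformly even entries, yet large enough to contain the four starting series. Balancing these two constraints is a delicate combinatorial search, and it is quite possible that no single $(\mathcal{M},h)$ accommodates all four residue classes at once, forcing one to run the induction separately for each family. Moreover, verifying the critical identity ``$U_2(\mathcal{M})\subseteq 2\mathcal{M}$'' amounts to proving eta-quotient identities by equating principal parts at every cusp of $\Gamma_0(N)$, which in practice demands substantial ad-hoc $q$-series computation and is the step most likely to require genuinely new ideas.
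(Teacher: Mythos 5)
There is a fundamental mismatch here: the statement you are addressing is Conjecture \ref{conj1}, which the paper does not prove. The authors pose it as an open problem, supported only by the finitely many cases established in Theorem \ref{Thm2} (which they prove by Radu's algorithm, checking coefficients up to the bound $\lfloor\nu\rfloor$ for each fixed $m$, $t$, $u$ in Table \ref{Table}). So there is no proof in the paper against which your argument can be measured, and more importantly, your proposal is not a proof either. What you have written is a research programme: the two ingredients that would actually carry the induction --- an explicit finitely generated module $\mathcal{M}$ of eta-quotients containing the four starting series, and a verified inclusion $h^{-1}U_2(h\,\mathcal{M})\subseteq 2\mathcal{M}$ --- are precisely the mathematical content of the conjecture, and you leave both unconstructed. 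You say so yourself in your ``Main obstacle'' paragraph. Until a concrete $(\mathcal{M},h)$ is exhibited and the $2$-divisibility of the $U_2$-matrix is proved, nothing is established beyond the base cases already contained in Theorem \ref{Thm2}, and the conjecture remains exactly as open as the paper leaves it.

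Two further cautions if you pursue this. First, the passage from level $k$ to level $k+1$ in the four families is not uniformly a single application of $U_2$: for the progressions $3\cdot2^{k+2}n+3\cdot2^{k}$ and $3\cdot2^{k+2}n+3^{2}\cdot2^{k}$ one must extract a specific odd residue class before halving, which changes the prefactor $h$ at each step and typically forces a separate module for each family, as you suspect. Second, the Watson--Atkin template succeeds for $p(5^kn+\delta_k)$ because the relevant genus-zero structure makes $U_5$ act on a one-variable polynomial ring with controllable $5$-adic valuations; there is no a priori guarantee that the analogous $2$-adic valuations here grow linearly in $k$ rather than, say, like $k/2$, and your proposal offers no evidence on this point. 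A more modest but genuinely provable intermediate goal would be an exact generating-function identity for $\sum\mathrm{PDO}_t(3\cdot2^{k}n)q^{n}$ modulo $2^{k+3}$ in the spirit of Conjecture \ref{conj2}, which is the route the paper itself takes for the powers of $3$.
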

Next, we present a new approach to prove the congruences in Conjecture \ref{conj_Lin}. We study 
\begin{align}\label{0.11}
\sum_{n=0}^{\infty}\text{PDO}_t\left(8\cdot3^kn\right)q^n
\end{align}
and 
\begin{align}\label{0.12}
\sum_{n=0}^{\infty}\text{PDO}_t\left(12\cdot3^kn\right)q^n
\end{align}
modulo $3^{k+3}$. From \cite[p. 16]{Lin2018}, we have
\begin{align}\label{0.1}
\sum_{n=0}^{\infty}\text{PDO}_t\left(8n\right)q^n\equiv2^2\cdot3^2qf^2_1f^2_2f^2_3f^2_6\pmod{3^3}
\end{align}
and from \cite[eq. (5.12)]{Lin2018}, we have
\begin{align}\label{0.2}
\sum_{n=0}^{\infty}\text{PDO}_t\left(12n\right)q^n\equiv3^2qf^4_6\pmod{3^3}.
\end{align}
From \eqref{0.1} and \eqref{0.2}, we obtain \eqref{lin_conj1} and \eqref{lin_conj2}, respectively, for $k=0$. Using basic $q$-series techniques and $2$- and $3$-dissections of certain $q$-products, we prove the following theorem.
\begin{theorem}\label{Thm3}
We have:
\begin{align}\label{0.3}
\sum_{n=0}^{\infty}\emph{PDO}_t\left(8\cdot3n\right)q^n&\equiv2^3\cdot3^3qf^2_1f^2_2f^2_3f^2_6\pmod{3^4},\\ \sum_{n=0}^{\infty}\emph{PDO}_t\left(12\cdot3n\right)q^n&\equiv2^5\cdot3^3qf^4_6\pmod{3^4}.\label{0.4}
\end{align}
\end{theorem}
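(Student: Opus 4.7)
The strategy is to sharpen \eqref{0.1} and \eqref{0.2} from $\bmod\, 3^3$ congruences to $\bmod\, 3^4$ identities, and then extract the $q^{3n}$ subseries via $3$-dissection. Since the claimed targets $2^3\!\cdot 3^3\, q f_1^2 f_2^2 f_3^2 f_6^2$ and $2^5\!\cdot 3^3\, q f_6^4$ already carry a factor of $3^3$, it suffices to track each intermediate eta-quotient modulo $3^4$. We begin by revisiting Lin's derivation of \eqref{0.1} and \eqref{0.2}, which starts from the exact generating function \eqref{gen_fn_PDOt} and proceeds by iterated $2$-dissections (using standard congruences $f_k^{3^j}\equiv f_{3k}^{3^{j-1}}\pmod{3^j}$) before reducing modulo $3^3$. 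Retaining one additional power of $3$ in that computation yields
\begin{align*}
\sum_{n\geq 0}\text{PDO}_t(8n)\,q^n &\equiv 36\, q\, f_1^2 f_2^2 f_3^2 f_6^2 + 27\, q\, R_1(q) \pmod{81},\\
\sum_{n\geq 0}\text{PDO}_t(12n)\,q^n &\equiv 9\, q\, f_6^4 + 27\, q\, R_2(q) \pmod{81},
\end{align*}
for explicit eta-quotient corrections $R_1(q), R_2(q)$ that need only be known modulo $3$.

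Next we $3$-dissect the right-hand sides. Factors of the form $f_{3k}$ already lie in $q^3$, so only the $f_1^{\pm}, f_2^{\pm}$ factors require explicit dissection. Using standard Hirschhorn-type $3$-dissections of $f_1^2$, $1/f_1^2$, and their $q\mapsto q^2$ analogues for $f_2$, we write each refined right-hand side in the form $q\cdot\bigl(S_0(q^3) + q\, S_1(q^3) + q^2\, S_2(q^3)\bigr)$. To obtain the generating function of $\text{PDO}_t(24n)$ (resp.\ $\text{PDO}_t(36n)$) we keep only the summand whose $q$-exponent is $\equiv 2 \pmod 3$, i.e.\ $q^3\, S_2(q^3)$; substituting $q^3\mapsto q$ then gives an exact expression for $\sum \text{PDO}_t(24n) q^n$ (resp.\ $\sum \text{PDO}_t(36n) q^n$) modulo $3^4$. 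Reducing with $f_k^{3^j}\equiv f_{3k}^{3^{j-1}}\pmod{3^j}$, most summands acquire enough powers of $3$ to vanish modulo $3^4$, and the surviving terms must be matched algebraically (via a few auxiliary eta-quotient identities) to $2^3\!\cdot 3^3\, q\, f_1^2 f_2^2 f_3^2 f_6^2$ and $2^5\!\cdot 3^3\, q\, f_6^4$ respectively.

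The main obstacle is the refinement step: one cannot simply start from \eqref{0.1}, \eqref{0.2}, because a hidden $27R(q)$ correction contributes $27[q^{3n-1}]R(q)\pmod{81}$ to the extracted coefficient of $q^{3n}$, which is generically nonzero. Either one must compute $R_1(q), R_2(q) \pmod{3}$ by a careful redo of Lin's dissection chain, or---more cleanly---perform every dissection on the exact eta-quotient identities and reduce modulo $3^4$ only at the very end. The computation for \eqref{0.4} is noteworthy: the right-hand side $2^5\!\cdot 3^3\, q f_6^4$ is a power series in $q^{6k+1}$, so all $q^{3n}$ contributions must collapse to the single clean form $2^5\!\cdot 3^3 q f_6^4$, requiring careful cancellation among the various $3$-dissection summands. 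Beyond these bookkeeping issues, the remaining manipulations are standard $q$-series calculations.
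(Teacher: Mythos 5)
Your strategy is sound, and the ``cleaner'' variant you mention at the end is exactly the route the paper takes: rather than refining \eqref{0.1}--\eqref{0.2} by a correction term $27R(q)$, the paper starts from Lin's \emph{exact} eta-quotient identities $\sum_{n\ge0}\text{PDO}_t(8n)q^n=36\,q f_2^8f_3^7/f_1^{13}$ and $\sum_{n\ge0}\text{PDO}_t(6n)q^n=16\,qf_2^4f_3^3f_4^4/f_1^9$, and reduces modulo $3^4$ only afterwards. Since both expressions carry an explicit factor $3^2$ (the second acquires one after the $2$-dissection \eqref{1.3} of $f_3/f_1^3$ used to pass from $6n$ to $12n$), the remaining eta-quotient only needs to be controlled modulo $3^2$, via $f_1^9\equiv f_3^3\pmod{3^2}$. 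Your diagnosis of why one cannot start from \eqref{0.1}--\eqref{0.2} alone --- the hidden $27R(q)$ contributes modulo $81$ after extraction, precisely because the main term's $q^{3n}$-part is itself divisible by $27$ --- is correct and is the right reason to prefer the exact identities.

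That said, as written the proposal is a plan rather than a proof: everything after ``the surviving terms must be matched algebraically (via a few auxiliary eta-quotient identities)'' is the actual content of the theorem, and none of it is carried out. Concretely: (i) you do not specify which exact generating functions you dissect; (ii) the dissections you name ($f_1^2$, $1/f_1^2$ and their $q\mapsto q^2$ analogues) are not the ones that make the computation close up --- after the mod-$3^2$ reduction the relevant factors are $f_1^3$, $f_1^2/f_2$ for \eqref{0.3} and $f_2/f_1^2$, $1/f_1^3$ for \eqref{0.4}, handled by \eqref{1.2}, \eqref{1.4}, \eqref{1.5}, \eqref{1.6}; and (iii) the final collapse is not automatic: for \eqref{0.4} the $q^{3n}$-extraction leaves $2^5\cdot3^3q^3f_6^{12}+2^4\cdot3^4q^3f_6^4f_9^9/f_3^{11}$, where the second term dies modulo $3^4$ and the first becomes $2^5\cdot3^3qf_6^4$ only after the further reduction $f_2^{12}\equiv f_6^4\pmod 3$. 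None of these steps would fail, but they constitute the proof, so what you have is a gap of execution rather than of idea.
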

With $k=1$, \eqref{lin_conj1} and \eqref{lin_conj2} readily follow from Theorem \ref{Thm3}. For $k\geq2$, the technique we use to prove Theorem \ref{Thm3} becomes tedious to analyze \eqref{0.11} and \eqref{0.12} modulo $3^{k+3}$. Here, we use the theory of modular forms for the case $k=2$. In particular, we have the following theorem.
\begin{theorem}\label{Thm4}
We have:
\begin{align}\label{0.5}
\sum_{n=0}^{\infty}\emph{PDO}_t\left(8\cdot3^2n\right)q^n&\equiv2^4\cdot3^4qf^2_1f^2_2f^2_3f^2_6\pmod{3^5},\\ \sum_{n=0}^{\infty}\emph{PDO}_t\left(12\cdot3^2n\right)q^n&\equiv3^4qf^4_6\pmod{3^5}.\label{0.6}
\end{align}
\end{theorem}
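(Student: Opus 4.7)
The plan is to convert Theorem~\ref{Thm4} into two congruences modulo $3$ between modular forms and then apply Sturm's bound. The key input is the $k=2$ case of Conjecture~\ref{conj_Lin}, proved in \cite{Barman2023}, which ensures $\text{PDO}_t(72n)\equiv \text{PDO}_t(108n)\equiv 0\pmod{3^4}$. Writing
\begin{align*}
\sum_{n\ge 0}\text{PDO}_t(72n)q^n = 3^4 G_1(q), \qquad \sum_{n\ge 0}\text{PDO}_t(108n)q^n = 3^4 G_2(q)
\end{align*}
with $G_1, G_2\in\mathbb{Z}[[q]]$, and using $2^4\equiv 1\pmod 3$, Theorem~\ref{Thm4} is equivalent to
\begin{align*}
G_1(q)\equiv q\,f_1^2 f_2^2 f_3^2 f_6^2\pmod 3, \qquad G_2(q)\equiv q\,f_6^4\pmod 3.
\end{align*}

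First I would realize each $G_i$ inside a space of modular forms. Starting from the generating function \eqref{gen_fn_PDOt}, I would extract the subseries at arguments divisible by $72$ (respectively $108$) by applying the $U_3$-operator to the identities of Theorem~\ref{Thm3}, obtaining an expression for $G_i$ as an eta-quotient times an integral power series. Multiplying by a carefully chosen auxiliary eta-quotient $\Psi_i(\tau)$ designed to cancel all poles at cusps of $\Gamma_0(N_i)$ for a suitable level $N_i$, I would arrange that both $3^4 G_i(q)\,\Psi_i(q)$ and the targets $q f_1^2 f_2^2 f_3^2 f_6^2\,\Psi_1(q)$ (respectively $q f_6^4\,\Psi_2(q)$) are $q$-expansions of holomorphic modular forms in a common space $M_{k_i}(\Gamma_0(N_i),\chi_i)$ for an explicit weight $k_i$ and nebentypus $\chi_i$. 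Admissibility and holomorphy at every cusp are certified by Ligozat's criteria applied to the orders of the eta-quotients.

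Once both members lie in a common finite-dimensional space, Sturm's bound reduces the required mod-$3$ congruence to the comparison of Fourier coefficients at indices $n\le \lfloor k_i\,[\mathrm{SL}_2(\mathbb{Z}):\Gamma_0(N_i)]/12\rfloor$. The finitely many required coefficients of $G_i$ are computable directly from \eqref{gen_fn_PDOt}, and the target eta-quotients are expanded by standard $q$-series arithmetic, so the verification collapses to a mechanical finite check.

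The principal obstacle is the modular-form realization: choosing $\Psi_i$ with the correct cusp orders to place $3^4 G_i\,\Psi_i$ inside $M_{k_i}(\Gamma_0(N_i),\chi_i)$ while keeping the level $N_i$ (and hence the Sturm bound, and hence the computation) as small as possible. This forces a delicate Ligozat-style accounting that simultaneously balances the cusp behaviour of the auxiliary form against that of both sides of the desired congruence. Once the triple $(k_i, N_i, \chi_i)$ is pinned down, the remaining coefficient comparison is routine, if lengthy.
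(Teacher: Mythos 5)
Your overall strategy---realizing both sides of \eqref{0.5} and \eqref{0.6} as modular forms on some $\Gamma_0(N_i)$ and invoking Sturm's bound---is the same as the paper's, and your device of dividing out $3^4$ (justified by the $k=2$ case of Conjecture~\ref{conj_Lin} from \cite{Barman2023}) so that Sturm is applied modulo the prime $3$ rather than modulo $3^5$ is a legitimate, arguably cleaner, way to run the final step, since Theorem~\ref{Sturm} as stated concerns a prime modulus. The genuine gap is in the modular-form realization of $\sum_{n\geq0}\text{PDO}_t(72n)q^n$ and $\sum_{n\geq0}\text{PDO}_t(108n)q^n$. You propose to obtain these by applying $U(3)$ to the identities of Theorem~\ref{Thm3}; but those identities are congruences modulo $3^4$ only, so anything derived from them determines $\text{PDO}_t(72n)$ and $\text{PDO}_t(108n)$ only modulo $3^4$---exactly one power of $3$ short of what \eqref{0.5} and \eqref{0.6} assert. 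No choice of auxiliary eta-quotient $\Psi_i$ recovers the lost precision. What is needed is an exact (or at least mod-$3^5$) realization: the paper starts from Lin's exact identities $\sum_{n\geq0}\text{PDO}_t(8n)q^n=36q\,f_2^8f_3^7/f_1^{13}$ and $\sum_{n\geq0}\text{PDO}_t(4n)q^n=6q\,f_2^3f_3^2f_6^3/f_1^6$, converts them via $f_1^{3^j}\equiv f_3^{3^{j-1}}\pmod{3^j}$ into eta-quotients $A_{k,i}(z)$ lying in $M_{3^{k+2}+1}(\Gamma_0(18),\chi_0)$, resp.\ $M_{3^{k+1}+1}(\Gamma_0(36),\chi_0)$, and only then applies $U(3)$ twice for \eqref{0.5} and three times for \eqref{0.6} (note $108=4\cdot3^3$, so the second series is reached from $\text{PDO}_t(4n)$, not $\text{PDO}_t(12n)$), staying inside the same space by Proposition~\ref{lemma_U_operator}.

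A secondary point you would need to make explicit: $3^4G_i(q)\Psi_i(q)$ is not itself a modular form, because $\sum_{n\geq0}\text{PDO}_t(72n)q^n$ is only \emph{congruent} modulo $3^5$ to one. Before applying Sturm you must replace $G_i$ by $F/3^4$, where $F$ is the actual integral modular form (the $U(3)$-iterate of $A_{k,i}$) congruent to $3^4G_i$ modulo $3^5$; integrality of $F/3^4$ then again follows from \cite{Barman2023}. This repair is routine, but it presupposes precisely the realization step that your proposal leaves unresolved.
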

For $k=2$, Conjecture \ref{conj_Lin} follows from Theorem \ref{Thm4}. Observing \eqref{0.1}-\eqref{0.6} and analyzing \eqref{0.11} and \eqref{0.12} for more values of $k$ on \texttt{SageMath}, we propose the following conjecture.
\begin{conj}\label{conj2}
For $k\geq 0$, we have
\begin{align}\label{conj2_1}
\sum_{n=0}^{\infty}\text{PDO}_t\left(8\cdot3^kn\right)q^n&\equiv2^{k+2}\cdot3^{k+2}qf^2_1f^2_2f^2_3f^2_6\pmod{3^{k+3}},\\ \sum_{n=0}^{\infty}\text{PDO}_t\left(12\cdot3^kn\right)q^n&\equiv2^{\alpha_k}\cdot3^{k+2}qf^4_6\pmod{3^{k+3}},\label{conj2_2}
\end{align}
where $\alpha_k:=2k+3$, for odd $k$ and $\alpha_k:=0$, for even $k$.
\end{conj}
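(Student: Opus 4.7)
I would attempt Conjecture~\ref{conj2} by induction on $k$, taking the base cases $k=0,1,2$ from \eqref{0.1}, \eqref{0.2}, Theorem~\ref{Thm3}, and Theorem~\ref{Thm4}. The engine of the induction is the Atkin $U_3$-operator, since
$$\sum_{n\ge 0}\text{PDO}_t\bigl(8\cdot 3^{k+1}n\bigr)q^n \;=\; U_3\!\left(\sum_{n\ge 0}\text{PDO}_t\bigl(8\cdot 3^{k}n\bigr)q^n\right),$$
and analogously for $12\cdot 3^k$. Using $f_m = q^{-m/24}\eta(mz)$, the conjectured main terms become the eta-quotients $F(z) := qf_1^2f_2^2f_3^2f_6^2 = \eta(z)^2\eta(2z)^2\eta(3z)^2\eta(6z)^2$, which (up to scalar) is the unique normalized cusp form in $S_4(\Gamma_0(6))$, and $G(z) := qf_6^4 = \eta(6z)^4$, a cusp form of weight~$2$.

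Two facts about $U_3$ drive the argument. Since $S_4(\Gamma_0(6))$ is one-dimensional, $F$ is automatically a $U_3$-eigenform; Atkin--Lehner theory (exploiting $3\|6$) gives $U_3(F) = \pm 3F$, and a short Fourier-coefficient calculation pins down the sign. For $G$ the behaviour is more striking: every Fourier exponent of $qf_6^4$ lies in $1+6\mathbb{Z}$, so $U_3(G)=0$ identically. Writing $g_k := \sum_{n\ge 0}\text{PDO}_t(8\cdot 3^kn)q^n$ and $g_k' := \sum_{n\ge 0}\text{PDO}_t(12\cdot 3^kn)q^n$, and decomposing
$$g_k = 2^{k+2}3^{k+2}F + 3^{k+3}E_k, \qquad g_k' = 2^{\alpha_k}3^{k+2}G + 3^{k+3}E_k',$$
with $E_k, E_k' \in \mathbb{Z}[[q]]$, applying $U_3$ and using the two facts above reduces the inductive step to explicit mod-$3$ identities of the form $U_3(E_k) \equiv c_k F \pmod 3$ for a $2$-power $c_k$ pinned down by the desired multiplier at level $k+1$, and $U_3(E_k') \equiv 2^{\alpha_{k+1}} G \pmod 3$.

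The main obstacle is controlling these error-term reductions modulo $3$ uniformly in $k$. I would try to embed the sequences $(E_k)$ and $(E_k')$ into finite-dimensional spaces of $3$-adic modular forms stable under $U_3$, and analyze the $U_3$-spectrum modulo~$3$ on those spaces. Multiplying by an Eisenstein series congruent to $1$ modulo a high power of~$3$ would allow one to work in a classical weight and invoke the Ono--Taguchi theorem on nilpotency of Hecke operators modulo a prime---the same framework used by the authors at $p=2$ in \cite{Barman2023}. The second congruence \eqref{conj2_2} is likely the harder half: since $U_3$ annihilates $G$, the entire next main term must emerge from the error, so one must identify the exact $G$-component of $U_3(E_k') \bmod 3$ rather than merely bounding its $3$-adic valuation, which will require an explicit two-dimensional $U_3$-orbit in the reduced space to distinguish the even- and odd-$k$ values of $\alpha_k$.
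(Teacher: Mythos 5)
First, note that the statement you are asked to prove is stated in the paper as Conjecture~\ref{conj2}: the authors prove it only for $k=0,1,2$ (via \eqref{0.1}, \eqref{0.2}, Theorem~\ref{Thm3} and Theorem~\ref{Thm4}) and support the general case by \texttt{SageMath} computations, explicitly remarking that their Sturm-bound verification becomes infeasible for larger $k$. So there is no proof in the paper to compare against, and your proposal must be judged on its own as an attempted proof of an open statement. Your structural observations are sound and genuinely useful: $\eta(z)^2\eta(2z)^2\eta(3z)^2\eta(6z)^2$ does span $S_4(\Gamma_0(6))$ and is a $U_3$-eigenform with eigenvalue $-3$, every exponent in $qf_6^4$ is $\equiv 1\pmod 3$ so $U_3$ kills it, and the induction does reduce to mod-$3$ statements $U_3(E_k)\equiv 0\pmod 3$ (the two candidate multiples of $F$ coming from the eigenvalue $\pm3$ give $2^{k+3}\mp 2^{k+2}$, which is $3\cdot 2^{k+2}\equiv 0$ for the actual eigenvalue $-3$) and $U_3(E_k')\equiv 2^{\alpha_{k+1}}qf_6^4\pmod 3$.

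The gap is that these reductions are exactly the conjecture in disguise, and the tools you propose for closing them do not work as stated. The error terms $E_k$, $E_k'$ are defined only as formal power series; to speak of "the $U_3$-spectrum on a finite-dimensional $U_3$-stable space containing them" you must first exhibit them as reductions of holomorphic modular forms of bounded weight and level, and the natural holomorphic proxies (multiplying by $\eta(z)^a/\eta(3z)^b\equiv 1\pmod{3^{k+3}}$, as in the paper's proof of Theorem~\ref{Thm4}) have weight growing like $3^{k+2}$ --- this is precisely the obstruction that stops the authors at $k=2$. Even granting a weight collapse mod $3$ via $E_4\equiv 1\pmod 3$, the required statement is an exact identification of the image of $U_3$ on a specific vector for every $k$, not an eventual vanishing; the Ono--Taguchi theorem is a mod-$2$ nilpotency result for the operators $T_\ell$ with $\ell$ coprime to the level, it says nothing about $U_3$ at level divisible by $3$, and nilpotency is in any case the wrong shape for \eqref{conj2_2}, where a prescribed nonzero $2$-power multiple of $qf_6^4$ must emerge from the error term at every single step. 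Until you produce the uniform-in-$k$ mod-$3$ identity for $U_3(E_k)$ and $U_3(E_k')$, the induction does not close, and what you have is a plausible programme rather than a proof.
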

In comparison to Conjecture \ref{conj_Lin}, Conjecture \ref{conj2} is more general. Also, a proof of Conjecture \ref{conj2} will immediately settle Conjecture \ref{conj_Lin}. 
\begin{theorem}\label{Thm5}
Conjecture \ref{conj2} implies Conjecture \ref{conj_Lin}.
\end{theorem}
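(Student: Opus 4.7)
The plan is to extract the explicit factor $3^{k+2}$ sitting on the right-hand side of each congruence in Conjecture \ref{conj2} and read off coefficient-wise divisibility of the left-hand side. Both congruences in the conjecture are of the shape ``generating function $\equiv 3^{k+2}\cdot(\text{integer power series})\pmod{3^{k+3}}$'', so the divisibility statements in Conjecture \ref{conj_Lin} should fall out immediately.

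First I would verify that $qf_1^2 f_2^2 f_3^2 f_6^2$ and $qf_6^4$ are honest elements of $\mathbb{Z}[[q]]$. This is immediate since each $f_m=\prod_{j\geq 1}(1-q^{jm})\in\mathbb{Z}[[q]]$ and all exponents appearing in these eta-quotients are non-negative integers. Consequently, the right-hand sides of \eqref{conj2_1} and \eqref{conj2_2} lie in $3^{k+2}\mathbb{Z}[[q]]$, using that the numerical prefactors $2^{k+2}$ and $2^{\alpha_k}$ are non-negative integers for every $k\geq 0$; note that $\alpha_k\geq 0$ for both parities of $k$, directly from the definition $\alpha_k=2k+3$ (odd $k$) or $\alpha_k=0$ (even $k$).

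Next I would invoke the elementary observation that if $A(q),B(q)\in\mathbb{Z}[[q]]$ satisfy $A(q)\equiv B(q)\pmod{3^{k+3}}$ and $B(q)\in 3^{k+2}\mathbb{Z}[[q]]$, then $A(q)\in 3^{k+2}\mathbb{Z}[[q]]$. Applied to $A(q)=\sum_{n\geq 0}\text{PDO}_t(8\cdot 3^k n)\, q^n$ with $B(q)$ the right-hand side of \eqref{conj2_1}, this gives $\text{PDO}_t(8\cdot 3^k n)\equiv 0\pmod{3^{k+2}}$ for all $n\geq 0$, which is \eqref{lin_conj1}. The identical argument applied to $A(q)=\sum_{n\geq 0}\text{PDO}_t(12\cdot 3^k n)\, q^n$ together with \eqref{conj2_2} yields \eqref{lin_conj2}.

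There is no genuine obstacle here: the substantive content is packaged entirely into Conjecture \ref{conj2}, and the implication is essentially a comparison of $3$-adic valuations. The only care needed is the bookkeeping check that $\alpha_k\geq 0$ in every case, which is immediate, so that the factor $2^{\alpha_k}$ does not remove any power of $3$ from the right-hand side.
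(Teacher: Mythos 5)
Your proof is correct, and it is in fact more direct than the one in the paper. You read the divisibility off immediately: since the right-hand side of each congruence in Conjecture \ref{conj2} carries an explicit factor $3^{k+2}$ times an element of $\mathbb{Z}[[q]]$, and the congruence holds modulo $3^{k+3}$ (whose multiples are a fortiori multiples of $3^{k+2}$), every coefficient of the left-hand side is divisible by $3^{k+2}$ --- applied separately for each fixed $k$, with no interaction between different values of $k$. The paper instead argues by induction on $k$: it uses \eqref{conj2_1} at levels $k$ and $k+1$ to derive
$\sum_{n\geq 0}\text{PDO}_t(8\cdot 3^{k+1}n)q^n \equiv 2\cdot 3\sum_{n\geq 0}\text{PDO}_t(8\cdot 3^{k}n)q^n \pmod{3^{k+4}}$,
and then feeds in the inductive hypothesis $\text{PDO}_t(8\cdot 3^k n)\equiv 0\pmod{3^{k+2}}$ to conclude divisibility by $3^{k+3}$ at level $k+1$ (the base case $k=0$ coming from \eqref{0.1} and \eqref{0.2}). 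Both arguments rest on the same observation --- the explicit power of $3$ in front of the integral $q$-series --- but your non-inductive version is shorter and needs neither a base case nor the relation between consecutive generating functions; the paper's version has the minor side benefit of exhibiting how the generating functions at consecutive levels of $k$ are proportional modulo $3^{k+4}$. Your bookkeeping on $\alpha_k\geq 0$ (so that $2^{\alpha_k}$ is an integer and cannot disturb the $3$-adic valuation) is the right thing to check and is handled correctly.
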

%%%%%%%%%%%%%%%%
\section{Preliminaries}
We recall some definitions and basic facts on modular forms. For more details, see for example \cite{koblitz1993, ono2004}. We first define the matrix groups 
\begin{align*}
\text{SL}_2(\mathbb{Z}) & :=\left\{\begin{bmatrix}
a  &  b \\
c  &  d      
\end{bmatrix}: a, b, c, d \in \mathbb{Z}, ad-bc=1
\right\},\\
\Gamma_{0}(N) & :=\left\{
\begin{bmatrix}
a  &  b \\
c  &  d      
\end{bmatrix} \in \text{SL}_2(\mathbb{Z}) : c\equiv 0\pmod N \right\},
\end{align*}
\begin{align*}
\Gamma_{1}(N) & :=\left\{
\begin{bmatrix}
a  &  b \\
c  &  d      
\end{bmatrix} \in \Gamma_0(N) : a\equiv d\equiv 1\pmod N \right\},
\end{align*}
and 
\begin{align*}\Gamma(N) & :=\left\{
\begin{bmatrix}
a  &  b \\
c  &  d      
\end{bmatrix} \in \text{SL}_2(\mathbb{Z}) : a\equiv d\equiv 1\pmod N, ~\text{and}~ b\equiv c\equiv 0\pmod N\right\},
\end{align*}
where $N$ is a positive integer. A subgroup $\Gamma$ of $\text{SL}_2(\mathbb{Z})$ is called a congruence subgroup if $\Gamma(N)\subseteq \Gamma$ for some $N$. The smallest $N$ such that $\Gamma(N)\subseteq \Gamma$
is called the level of $\Gamma$. For example, $\Gamma_0(N)$ and $\Gamma_1(N)$
are congruence subgroups of level $N$. 
\par Let $\mathbb{H}:=\{z\in \mathbb{C}: \text{Im}(z)>0\}$ be the upper half of the complex plane. The group $$\text{GL}_2^{+}(\mathbb{R})=\left\{\begin{bmatrix}
a  &  b \\
c  &  d      
\end{bmatrix}: a, b, c, d\in \mathbb{R}~\text{and}~ad-bc>0\right\}$$ acts on $\mathbb{H}$ by $\begin{bmatrix}
a  &  b \\
c  &  d      
\end{bmatrix} z=\displaystyle \frac{az+b}{cz+d}$.  
We identify $\infty$ with $\displaystyle\frac{1}{0}$ and define $\begin{bmatrix}
a  &  b \\
c  &  d      
\end{bmatrix} \displaystyle\frac{r}{s}=\displaystyle \frac{ar+bs}{cr+ds}$, where $\displaystyle\frac{r}{s}\in \mathbb{Q}\cup\{\infty\}$.
This gives an action of $\text{GL}_2^{+}(\mathbb{R})$ on the extended upper half-plane $\mathbb{H}^{\ast}=\mathbb{H}\cup\mathbb{Q}\cup\{\infty\}$. 
Suppose that $\Gamma$ is a congruence subgroup of $\text{SL}_2(\mathbb{Z})$. A cusp of $\Gamma$ is an equivalence class in $\mathbb{P}^1=\mathbb{Q}\cup\{\infty\}$ under the action of $\Gamma$.
\par The group $\text{GL}_2^{+}(\mathbb{R})$ also acts on functions $f: \mathbb{H}\rightarrow \mathbb{C}$. In particular, suppose that $\gamma=\begin{bmatrix}
a  &  b \\
c  &  d      
\end{bmatrix}\in \text{GL}_2^{+}(\mathbb{R})$. If $f(z)$ is a meromorphic function on $\mathbb{H}$ and $\ell$ is an integer, then define the slash operator $|_{\ell}$ by 
$$(f|_{\ell}\gamma)(z):=(\text{det}~{\gamma})^{\ell/2}(cz+d)^{-\ell}f(\gamma z).$$
\begin{definition}
	Let $\Gamma$ be a congruence subgroup of level $N$. A holomorphic function $f: \mathbb{H}\rightarrow \mathbb{C}$ is called a modular form with integer weight $\ell$ on $\Gamma$ if the following hold:
	\begin{enumerate}
		\item We have $$f\left(\displaystyle \frac{az+b}{cz+d}\right)=(cz+d)^{\ell}f(z)$$ for all $z\in \mathbb{H}$ and all $\begin{bmatrix}
		a  &  b \\
		c  &  d      
		\end{bmatrix} \in \Gamma$.
		\item If $\gamma\in \text{SL}_2(\mathbb{Z})$, then $(f|_{\ell}\gamma)(z)$ has a Fourier expansion of the form $$(f|_{\ell}\gamma)(z)=\displaystyle\sum_{n\geq 0}a_{\gamma}(n)q_N^n,$$
		where $q_N:=e^{2\pi iz/N}$.
	\end{enumerate}
In addition, if $a_{\gamma}(0)=0$ for all $\gamma \in \text{SL}_2(\mathbb{Z})$, then $f$ is called a cusp form.
\end{definition}
For a positive integer $\ell$, the complex vector space of modular forms (resp. cusp forms) of weight $\ell$ with respect to a congruence subgroup $\Gamma$ is denoted by $M_{\ell}(\Gamma)$ (resp. $S_{\ell}(\Gamma)$).
\begin{definition}\cite[Definition 1.15]{ono2004}
	If $\chi$ is a Dirichlet character modulo $N$, then we say that a modular form $f\in M_{\ell}(\Gamma_1(N))$ (resp. $S_{\ell}(\Gamma_1(N))$) has Nebentypus character $\chi$ if
	$$f\left( \frac{az+b}{cz+d}\right)=\chi(d)(cz+d)^{\ell}f(z)$$ for all $z\in \mathbb{H}$ and all $\begin{bmatrix}
	a  &  b \\
	c  &  d      
	\end{bmatrix} \in \Gamma_0(N)$. The space of such modular forms (resp. cusp forms) is denoted by $M_{\ell}(\Gamma_0(N), \chi)$ (resp. $S_{\ell}(\Gamma_0(N), \chi)$). 
\end{definition}
In this paper, the relevant modular forms are those that arise from eta-quotients. Recall that the Dedekind eta-function $\eta(z)$ is defined by
\begin{align*}
\eta(z):=q^{1/24}\prod_{n=1}^{\infty}(1-q^n)=q^{1/24}f_1,
\end{align*}
where $q:=e^{2\pi iz}$ and $z\in \mathbb{H}$. A function $f(z)$ is called an eta-quotient if it is of the form
\begin{align*}
f(z)=\prod_{\delta\mid N}\eta(\delta z)^{r_\delta},
\end{align*}
where $N$ is a positive integer and $r_{\delta}$ is an integer. We now recall two theorems from \cite[p. 18]{ono2004} which are very useful in checking modularity of eta-quotients.
\begin{theorem}\cite[Theorem 1.64]{ono2004}\label{thm_ono1} If $f(z)=\prod_{\delta\mid N}\eta(\delta z)^{r_\delta}$ 
	is an eta-quotient such that $\ell=\frac{1}{2}\sum_{\delta\mid N}r_{\delta}\in \mathbb{Z}$, 
	$$\sum_{\delta\mid N} \delta r_{\delta}\equiv 0 \pmod{24}$$ and
	$$\sum_{\delta\mid N} \frac{N}{\delta}r_{\delta}\equiv 0 \pmod{24},$$
	then $f(z)$ satisfies $$f\left( \frac{az+b}{cz+d}\right)=\chi(d)(cz+d)^{\ell}f(z)$$
	for every  $\begin{bmatrix}
	a  &  b \\
	c  &  d      
	\end{bmatrix} \in \Gamma_0(N)$. Here the character $\chi$ is defined by $\chi(d):=\left(\frac{(-1)^{\ell} s}{d}\right)$, where $s:= \prod_{\delta\mid N}\delta^{r_{\delta}}$. 
\end{theorem}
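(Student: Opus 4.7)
The plan is to reduce the claim to the classical transformation law of the Dedekind eta function under $\mathrm{SL}_2(\mathbb{Z})$, namely
\[
\eta\!\left(\frac{az+b}{cz+d}\right) = \varepsilon(a,b,c,d)\,(cz+d)^{1/2}\,\eta(z),
\]
where $\varepsilon$ is an explicit $24$-th root of unity (a Dedekind sum exponential) and the branch of the square root is the principal one. Once the eta transformation is available, the proof of the stated theorem is essentially a careful bookkeeping of multiplier systems over the divisors of $N$.

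First I would fix $\gamma=\bigl(\begin{smallmatrix}a&b\\c&d\end{smallmatrix}\bigr)\in\Gamma_0(N)$ and, for each $\delta\mid N$, observe the matrix identity
\[
\delta\cdot\gamma z \;=\; \frac{a(\delta z)+b\delta}{(c/\delta)(\delta z)+d} \;=\; \gamma_\delta\cdot(\delta z),
\qquad
\gamma_\delta := \begin{pmatrix} a & b\delta \\ c/\delta & d\end{pmatrix}.
\]
Because $\delta\mid N\mid c$, the entry $c/\delta$ is an integer, and $\det(\gamma_\delta)=ad-bc=1$, so $\gamma_\delta\in\mathrm{SL}_2(\mathbb{Z})$. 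Applying the eta transformation to $\eta(\delta\gamma z)=\eta(\gamma_\delta(\delta z))$ yields
\[
\eta(\delta\gamma z) \;=\; \varepsilon(\gamma_\delta)\,(cz+d)^{1/2}\,\eta(\delta z).
\]
Raising to $r_\delta$ and multiplying over $\delta\mid N$ gives
\[
f(\gamma z) \;=\; \Bigl(\prod_{\delta\mid N}\varepsilon(\gamma_\delta)^{r_\delta}\Bigr)\,(cz+d)^{\ell}\,f(z),
\]
using the integrality of $\ell=\tfrac12\sum_\delta r_\delta$ to collapse the square roots into a single factor $(cz+d)^\ell$.

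Next I would identify the scalar $\prod_{\delta\mid N}\varepsilon(\gamma_\delta)^{r_\delta}$ with the Kronecker symbol $\chi(d)=\left(\tfrac{(-1)^\ell s}{d}\right)$. Using the standard formula for $\varepsilon$ in terms of Dedekind sums, one splits $\varepsilon(\gamma_\delta)$ into a $24$-th root of unity expressible through $a+d$, $b\delta$, and $c/\delta$, times a quadratic-residue symbol involving $c/\delta$ and $d$. The hypothesis $\sum_\delta \delta r_\delta\equiv 0\pmod{24}$ is precisely what makes the accumulated $24$-th roots of unity coming from the $b\delta$ contributions cancel, and the hypothesis $\sum_\delta (N/\delta)r_\delta\equiv 0\pmod{24}$ plays the symmetric role for the $c/\delta$ contributions; together with the integrality of $\ell$, these force the exponential part of the product to equal $1$. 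What survives is a product of Jacobi symbols which, by quadratic reciprocity and multiplicativity, collapses to $\left(\tfrac{(-1)^\ell \prod_\delta\delta^{r_\delta}}{d}\right)=\chi(d)$.

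The main obstacle will be the third step: tracking the Dedekind-sum factors and the quadratic-residue factors simultaneously and showing that the two congruence conditions in the hypothesis exactly suffice to kill the roots-of-unity parts while leaving the correct Legendre-symbol character. This is the technical heart of the argument, and it is most cleanly handled by treating two cases according to the parity of $c/\delta$ (so that the correct variant of quadratic reciprocity applies) and by a careful normalization when $c\leq 0$, where one uses the invariance of $\eta$ under $z\mapsto z+1$ and under $\gamma\mapsto -\gamma$ to reduce to the case $c>0$.
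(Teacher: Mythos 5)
The paper does not prove this statement at all: it is imported verbatim as Theorem 1.64 of Ono's \emph{The web of modularity} (itself resting on the classical computations of Gordon--Hughes, Newman and Ligozat), so there is no in-paper argument to compare yours against. Judged on its own terms, your reduction is the standard and correct one: the conjugation identity $\delta\gamma z=\gamma_\delta(\delta z)$ with $\gamma_\delta=\bigl(\begin{smallmatrix}a&b\delta\\ c/\delta&d\end{smallmatrix}\bigr)$, the observation that $c/\delta\in\mathbb{Z}$ and $\det\gamma_\delta=1$, the cancellation $(c/\delta)(\delta z)+d=cz+d$, and the collapse of the half-integral automorphy factors into $(cz+d)^{\ell}$ using $2\ell=\sum_\delta r_\delta\in 2\mathbb{Z}$ are all correct and are exactly how every published proof begins.

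The genuine gap is your third step, and you have correctly located it yourself: identifying $\prod_{\delta\mid N}\varepsilon(\gamma_\delta)^{r_\delta}$ with $\left(\frac{(-1)^{\ell}s}{d}\right)$ is the entire content of the theorem, and your proposal only describes what should happen rather than doing it. To close it you must write down the explicit eta multiplier, e.g.\ for $c>0$ odd, $\varepsilon(\gamma)=\left(\frac{d}{c}\right)\exp\bigl(\tfrac{\pi i}{12}\bigl((a+d)c-bd(c^{2}-1)-3c\bigr)\bigr)$ (with a companion formula for $c$ even), substitute $b\mapsto b\delta$, $c\mapsto c/\delta$, and verify term by term that the weighted sums of the exponents are killed: the $bd(c^{2}/\delta^{2}-1)$ terms require both $\sum_\delta \delta r_\delta\equiv 0\pmod{24}$ and (via $c\sum_\delta r_\delta/\delta=(c/N)\sum_\delta (N/\delta)r_\delta$, using $N\mid c$) the second congruence, while the surviving Jacobi symbols $\prod_\delta\left(\frac{d}{c/\delta}\right)^{r_\delta}$ must be converted by reciprocity into $\left(\frac{(-1)^{\ell}\prod_\delta\delta^{r_\delta}}{d}\right)$, with separate treatment of the parity of $c/\delta$ and of the sign of $c$. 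None of this is routine bookkeeping that can be waved through; the dependence of the reciprocity step on $d\bmod 4$ and the interaction of the $(-1)^{\ell}$ factor with the $\exp(\pi i(\cdot)/12)$ residue is precisely where an unchecked computation can silently produce the wrong character. As it stands the proposal is a correct plan for the standard proof, not a proof.
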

Suppose that $f$ is an eta-quotient satisfying the conditions of Theorem \ref{thm_ono1} and that the associated weight $\ell$ is a positive integer. If $f(z)$ is holomorphic (resp. vanishes) at all of the cusps of $\Gamma_0(N)$, then $f(z)\in M_{\ell}(\Gamma_0(N), \chi)$ (resp. $S_{\ell}(\Gamma_0(N), \chi)$). The following theorem gives the necessary criterion for determining orders of an eta-quotient at cusps.
\begin{theorem}\cite[Theorem 1.65]{ono2004}\label{thm_ono2}
	Let $c, d$ and $N$ be positive integers with $d\mid N$ and $\gcd(c, d)=1$. If $f$ is an eta-quotient satisfying the conditions of Theorem \ref{thm_ono1} for $N$, then the 
	order of vanishing of $f(z)$ at the cusp $\frac{c}{d}$ 
	is $$\frac{N}{24}\sum_{\delta\mid N}\frac{\gcd(d,\delta)^2r_{\delta}}{\gcd(d,\frac{N}{d})d\delta}.$$
\end{theorem}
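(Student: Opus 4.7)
The plan is to compute the leading term of the Fourier expansion of $(f|_\ell \gamma_{c/d})(z)$ at $i\infty$, where $\gamma_{c/d} \in \text{SL}_2(\mathbb{Z})$ is any matrix sending $\infty$ to $c/d$, and then convert this leading exponent into an order of vanishing by invoking the width of the cusp. The engine for the computation is the classical transformation law of the Dedekind eta function: for every $\gamma = \begin{bmatrix} a & b \\ c & d \end{bmatrix} \in \text{SL}_2(\mathbb{Z})$ with $c > 0$, one has $\eta(\gamma z) = \varepsilon(\gamma)(cz+d)^{1/2}\eta(z)$ for an explicit $24$th root of unity $\varepsilon(\gamma)$.

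Since $\gcd(c,d) = 1$, I would pick a representative $\gamma_{c/d} = \begin{bmatrix} c & b_0 \\ d & d_0 \end{bmatrix} \in \text{SL}_2(\mathbb{Z})$. For each divisor $\delta \mid N$, the map $z \mapsto \delta \gamma_{c/d} z$ corresponds to the integer matrix $M_\delta = \begin{bmatrix} \delta c & \delta b_0 \\ d & d_0 \end{bmatrix}$ of determinant $\delta$. Setting $g := \gcd(d, \delta)$, one checks that $\gcd(\delta c, d) = g$ (since $\gcd(c,d)=1$), so Hermite normal form provides $U_\delta \in \text{SL}_2(\mathbb{Z})$ with $U_\delta M_\delta = \begin{bmatrix} g & b_T \\ 0 & \delta/g \end{bmatrix}$ for some integer $b_T$; thus $M_\delta = U_\delta^{-1} T_\delta$ with $T_\delta$ upper triangular of determinant $\delta$.

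Applying the eta transformation law to $\eta(\delta \gamma_{c/d} z) = \eta(U_\delta^{-1} T_\delta z)$ produces an automorphy factor times $\eta\bigl((g^2 z + g b_T)/\delta\bigr)$, whose leading behavior as $z \to i\infty$ is $e^{2\pi i (g^2 z + g b_T)/(24\delta)}$, contributing a power $q^{\gcd(d,\delta)^2/(24\delta)}$ (up to a root of unity) in the variable $q = e^{2\pi i z}$. Taking the product over $\delta \mid N$ with multiplicities $r_\delta$ and combining the accumulated $(cz+d)^{1/2}$ weight factors with the $(cz+d)^{-\ell}$ coming from the slash operator, which balance because $\ell = \tfrac{1}{2}\sum_\delta r_\delta$, the leading $q$-exponent of $(f|_\ell \gamma_{c/d})(z)$ is $\tfrac{1}{24}\sum_{\delta \mid N} \gcd(d,\delta)^2 r_\delta / \delta$.

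To finish, I would invoke the standard formula $h_{c/d} = N/\bigl(d \gcd(d, N/d)\bigr)$ for the width of the cusp $c/d$ of $\Gamma_0(N)$ (when $d \mid N$), obtained by computing the stabilizer of $c/d$ as a $\gamma_{c/d}$-conjugate of $\langle T^{h_{c/d}} \rangle$. Since $(f|_\ell \gamma_{c/d})$ is invariant under $z \mapsto z + h_{c/d}$, its expansion lives in powers of $q^{1/h_{c/d}}$, so the order of vanishing at $c/d$ equals $h_{c/d}$ times the leading $q$-exponent, giving precisely $\frac{N}{24}\sum_{\delta \mid N} \frac{\gcd(d,\delta)^2 r_\delta}{\gcd(d, N/d)\, d\, \delta}$. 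The main technical obstacle is the bookkeeping of the $24$th-root-of-unity phases $\varepsilon(U_\delta^{-1})$ in the product and the verification that they assemble into a genuine nonzero constant; the divisibility hypotheses of Theorem \ref{thm_ono1}, namely $\sum_\delta \delta r_\delta \equiv 0 \pmod{24}$ and $\sum_\delta (N/\delta) r_\delta \equiv 0 \pmod{24}$, are precisely what force this cancellation and ensure that the leading $q$-exponent is an integer multiple of $1/h_{c/d}$, making the formula a true order of vanishing.
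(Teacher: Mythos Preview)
The paper does not supply its own proof of this statement: Theorem~\ref{thm_ono2} is quoted verbatim from \cite[Theorem~1.65]{ono2004} as a preliminary tool, with no argument given. So there is nothing in the paper to compare your proposal against.

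That said, your sketch is the standard proof of this classical fact and is essentially correct. The factorization $M_\delta = U_\delta^{-1}T_\delta$ with $T_\delta = \begin{bmatrix} g & b_T \\ 0 & \delta/g \end{bmatrix}$ is exactly the right move; a quick check (which you might make explicit) is that the bottom row of $U_\delta^{-1}$ is forced to be $(d/g,\, (d_0 g - d b_T)/\delta)$, whence $j(U_\delta^{-1}, T_\delta z) = (g/\delta)(dz+d_0)$, so the product $\prod_\delta j(U_\delta^{-1}, T_\delta z)^{r_\delta/2}$ really does equal $(dz+d_0)^\ell$ times a nonzero constant, justifying the cancellation you assert. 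Your identification of the leading $q$-exponent $\tfrac{1}{24}\sum_{\delta\mid N}\gcd(d,\delta)^2 r_\delta/\delta$ and the subsequent multiplication by the cusp width $N/(d\gcd(d,N/d))$ are both correct. One small quibble: the hypotheses of Theorem~\ref{thm_ono1} are not strictly needed for the \emph{formula} for the leading exponent to hold (that comes out of the computation regardless); they are what guarantee that $f$ transforms with a genuine character under all of $\Gamma_0(N)$, hence that $(f|_\ell\gamma_{c/d})$ is honestly periodic of period $h_{c/d}$ and the notion of ``order of vanishing'' is well posed.
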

We now recall a result of Sturm \cite{Sturm1984} which gives a criterion to test whether two modular forms are congruent modulo a given prime.
\begin{theorem}\label{Sturm}
	Let $p$ be a prime number, and $f(z)=\sum_{n=n_0}^\infty a(n)q^n$ and $g(z)=\sum_{n=n_1}^\infty b(n)q^n$ be modular forms of weight $k$ for $\Gamma_0(N)$ of characters $\chi$ and $\psi$, respectively, where $n_0, n_1\geq 0$. If either $\chi=\psi$ and 
	\begin{align*}
		a(n)\equiv b(n)\pmod p~~ \text{for all}~~ n\leq \frac{kN}{12}\prod_{d~~ \text{prime};~~ d|N}\left(1+\frac{1}{d}\right),
	\end{align*}
	or $\chi\neq\psi$ and 
	\begin{align*}
		a(n)\equiv b(n)\pmod p~~ \text{for all}~~ n\leq \frac{kN^2}{12}\prod_{d~~ \text{prime};~~ d|N}\left(1-\frac{1}{d^2}\right),
	\end{align*}
	then $f(z)\equiv g(z)\pmod p$~~ $(i.e.,~~a(n)\equiv b(n)\pmod p~~\text{for all}~~n)$.
\end{theorem}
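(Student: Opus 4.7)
The plan is to set $h(z) = f(z) - g(z) = \sum_{n\geq 0} c(n) q^{n}$ with $c(n) = a(n) - b(n)$ and argue by contradiction: assume the mod-$p$ reduction $\bar{h}$ is nonzero and show that the index of its first nonzero Fourier coefficient modulo $p$ is necessarily bounded above by the stated Sturm bound. The hypothesis then forces $\bar{h} \equiv 0$, i.e.\ $a(n) \equiv b(n) \pmod{p}$ for all $n$, as required.

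If $\chi = \psi$, then $h \in M_{k}(\Gamma_{0}(N), \chi)$, so one works directly on $\Gamma_{0}(N)$. The valence formula (or Riemann--Roch for modular forms) guarantees that a nonzero form of weight $k$ on $\Gamma_{0}(N)$ has total zero-divisor of degree at most $\tfrac{k}{12}[\mathrm{SL}_{2}(\mathbb{Z}):\Gamma_{0}(N)]$ on the compactified modular curve, so in particular the order at the cusp $\infty$ is bounded by this quantity. Combined with the index formula $[\mathrm{SL}_{2}(\mathbb{Z}):\Gamma_{0}(N)] = N\prod_{p\mid N}(1 + 1/p)$, this yields the first Sturm bound. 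If $\chi \neq \psi$, then $h$ no longer carries a single character on $\Gamma_{0}(N)$, but the condition $d \equiv 1\pmod{N}$ defining $\Gamma_{1}(N)$ trivializes both $\chi$ and $\psi$, so $h \in M_{k}(\Gamma_{1}(N))$. Applying the same valence-formula argument with $[\mathrm{SL}_{2}(\mathbb{Z}):\Gamma_{1}(N)] = N^{2}\prod_{p\mid N}(1 - 1/p^{2})$ produces the enlarged bound $\tfrac{kN^{2}}{12}\prod_{p\mid N}(1 - 1/p^{2})$. An alternative is to multiply $h$ by an auxiliary modular form with character $(\chi\overline{\psi})^{-1}$ to re-enter the $\chi = \psi$ regime at a suitably increased weight.

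The main technical obstacle is that the valence formula is originally a complex-analytic statement about zeros of holomorphic functions on a compact Riemann surface, whereas the assertion we need is the analogous degree bound on the cusp-order of $\bar{h}$ viewed as a form over $\mathbb{F}_{p}$. The standard resolution, carried out in \cite{Sturm1984}, is to multiply by a high power of the discriminant $\Delta(z) = \eta(z)^{24}$, whose $q$-expansion begins with $q$, producing a cusp form with very large order of vanishing at $\infty$. By dimension counting in the finite-dimensional spaces of cusp forms of the relevant weight and level, such a cusp form can be lifted to a characteristic-zero form with the same $q$-expansion modulo $p$; applying the classical valence bound to the lift and reducing modulo $p$ delivers the required bound on $\bar{h}$. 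Once this lifting step is in place, the rest of the proof is essentially bookkeeping with index formulas and the character manipulations above.
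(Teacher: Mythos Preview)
The paper does not prove Theorem~\ref{Sturm}; it is quoted in the Preliminaries section as a known result of Sturm \cite{Sturm1984} and used later as a black box to bound the number of coefficients one must check. So there is no ``paper's own proof'' to compare against.

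That said, your sketch is the standard argument behind Sturm's theorem: reduce to showing that a nonzero mod-$p$ form cannot vanish at $\infty$ beyond the valence bound, handle the equal-character case on $\Gamma_0(N)$ and the unequal-character case on $\Gamma_1(N)$ via the respective index formulas, and bridge characteristic $0$ and characteristic $p$ by multiplying by a suitable power of $\Delta$ and lifting inside a finite-dimensional space. This is exactly the line taken in \cite{Sturm1984}, so your proposal is correct in outline. One small caveat: you should be explicit that the coefficients are assumed to lie in the ring of integers of a number field (so that reduction mod a prime above $p$ makes sense), and in the $\chi\neq\psi$ case the index of $\Gamma_1(N)$ in $\mathrm{SL}_2(\mathbb{Z})$ is actually $\tfrac{1}{2}N^2\prod_{p\mid N}(1-1/p^2)$ for $N>2$ because of $\pm I$; the stated bound is still valid (it is an upper bound), but if you want the sharpest constant you should track this factor.
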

\par We next recall the definition of $U$-operator, see \cite[p. 28]{ono2004}. For a positive integer $d$, the $U$-operator $U(d)$ is defined by
\begin{align*}
\left(\sum_{n\geq n_0}c(n)q^n\right) \mid U(d):= \sum_{n\geq n_0}c(dn)q^n.
\end{align*}
The following proposition from \cite{ono2004} describes the behavior of $U$-operator.
\begin{prop}{\cite[p. 28]{ono2004}}\label{lemma_U_operator}
Suppose that $f(z)\in M_{\ell}(\Gamma_0(N), \chi)$. If $d\mid N$, then
\begin{align*}
f(z)\mid U(d)\in M_{\ell}(\Gamma_0(N), \chi).
\end{align*}
\end{prop}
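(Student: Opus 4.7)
The plan is to establish the two conditions in the definition of a modular form for $f(z)\mid U(d)$: the weight-$\ell$ transformation law under $\Gamma_0(N)$ with nebentypus $\chi$, and holomorphy at every cusp of $\Gamma_0(N)$. The starting point is the standard coset description of the $U$-operator. Since the $q$-expansion identity $\left(\sum c(n)q^n\right)\mid U(d)=\sum c(dn)q^n$ is equivalent to the averaging
\[
f\mid U(d)\;=\;d^{\ell/2-1}\sum_{j=0}^{d-1} f\,\bigr|_{\ell}\,M_j,\qquad M_j:=\begin{bmatrix}1 & j\\ 0 & d\end{bmatrix},
\]
I would first establish this identity directly from the Fourier expansion of $f$, so that the slash-operator machinery can be applied.

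The heart of the argument is then to exhibit, for each $\gamma=\begin{bmatrix}a & b\\ c & d'\end{bmatrix}\in\Gamma_0(N)$, a permutation $\sigma$ of $\{0,1,\ldots,d-1\}$ together with matrices $\widetilde{\gamma}_j\in\Gamma_0(N)$ such that
\[
M_j\,\gamma\;=\;\widetilde{\gamma}_j\,M_{\sigma(j)}.
\]
Writing $\widetilde{\gamma}_j=\begin{bmatrix}A & B\\ C & D\end{bmatrix}$ and multiplying out, I would solve $A=a+jc$, $C=cd$, $D=d'-c\sigma(j)$, and $Bd=b+jd'-(a+jc)\sigma(j)$. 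The hypothesis $d\mid N$ together with $N\mid c$ gives $d\mid c$, so $a+jc\equiv a\pmod d$; the identity $ad'-bc=1$ then forces $\gcd(a,d)=\gcd(d',d)=1$, which lets me solve $a\,\sigma(j)\equiv b+jd'\pmod d$ uniquely for $\sigma(j)\in\{0,\ldots,d-1\}$ and simultaneously guarantees that $\sigma$ is a bijection. The condition $N\mid C=cd$ is immediate from $N\mid c$, and $\det\widetilde{\gamma}_j=1$ follows from comparing determinants, so $\widetilde{\gamma}_j\in\Gamma_0(N)$.

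Granting this decomposition, I would apply $f\bigr|_{\ell}\widetilde{\gamma}_j=\chi(D)f$ and observe that $D=d'-c\sigma(j)\equiv d'\pmod N$, so $\chi(D)=\chi(d')$ uniformly in $j$. Summing and using that $\sigma$ permutes the indices yields
\[
\bigl(f\mid U(d)\bigr)\bigr|_{\ell}\gamma\;=\;d^{\ell/2-1}\sum_{j}\chi(d')\,f\bigr|_{\ell}M_{\sigma(j)}\;=\;\chi(d')\bigl(f\mid U(d)\bigr),
\]
which is precisely the weight-$\ell$ transformation law for $\Gamma_0(N)$ with character $\chi$.

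It remains to verify holomorphy at every cusp of $\Gamma_0(N)$. For each $\gamma\in \mathrm{SL}_2(\mathbb{Z})$, I would again use the coset decomposition to write $\bigl(f\mid U(d)\bigr)\bigr|_{\ell}\gamma$ as a finite sum of expressions of the form $f\bigr|_{\ell}(M_j\gamma)$. Applying the Iwasawa/Smith normal form to each $M_j\gamma$, I can rewrite $M_j\gamma=\gamma_j'\begin{bmatrix}\alpha_j & \beta_j\\ 0 & \delta_j\end{bmatrix}$ with $\gamma_j'\in\mathrm{SL}_2(\mathbb Z)$ and positive integer $\delta_j$; since $f$ is holomorphic at every cusp of $\Gamma_0(N)$, $f\bigr|_{\ell}\gamma_j'$ has a Fourier expansion in non-negative powers of $q_N$, and the upper-triangular matrix only acts by a substitution that preserves this property. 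The main technical obstacle is therefore the combinatorial/number-theoretic book-keeping in the second step: verifying that the solvability of $a\,\sigma(j)\equiv b+jd'\pmod d$ together with $N\mid cd$ simultaneously produces a permutation $\sigma$ and keeps $\widetilde{\gamma}_j$ in $\Gamma_0(N)$, with the character value depending only on $d'\pmod N$.
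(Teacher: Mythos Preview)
The paper does not supply its own proof of this proposition; it is quoted verbatim from \cite[p.~28]{ono2004} as a known input, so there is no argument in the paper to compare against. Your proof is correct and follows the standard route found in references such as Ono's monograph or Koblitz: express $U(d)$ as the average $d^{\ell/2-1}\sum_{j=0}^{d-1} f\big|_{\ell}\begin{bmatrix}1&j\\0&d\end{bmatrix}$, then for each $\gamma\in\Gamma_0(N)$ use the hypothesis $d\mid N\mid c$ to solve $M_j\gamma=\widetilde{\gamma}_jM_{\sigma(j)}$ with $\widetilde{\gamma}_j\in\Gamma_0(N)$ and $\sigma$ a permutation, checking that the lower-right entry of $\widetilde{\gamma}_j$ is congruent to $d'$ modulo $N$ so that the character is preserved. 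Your treatment of the cusps via the Smith/Iwasawa factorisation of $M_j\gamma$ is also the standard one. In short, you have written out the textbook proof that the paper simply takes for granted.
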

%%%%%%%%%%%%%%%%%%%%%%%%
\par We finally discuss Radu's technique developed in \cite{radu1, radu2}, that we will use to prove certain congruences for eta-quotients. Define 
\begin{align*}
	\Gamma_{\infty} & :=\left\{
	\begin{bmatrix}
		1  &  n \\
		0  &  1      
	\end{bmatrix}: n\in \mathbb{Z}  \right\}.
\end{align*}
We recall that the index of $\Gamma_{0}(N)$ in $\text{SL}_2(\mathbb{Z})$ is
\begin{align*}
	[\text{SL}_2(\mathbb{Z}) : \Gamma_0(N)] = N\prod_{p|N}(1+p^{-1}),
\end{align*}
where $p$ denotes a prime.
\par 
For a positive integer $M$, let $R(M)$ be the set of integer sequences $r=(r_\delta)_{\delta|M}$ indexed by the positive divisors of $M$. 
If $r \in R(M)$ and $1=\delta_1<\delta_2< \cdots <\delta_k=M$ 
are the positive divisors of $M$, we write $r=(r_{\delta_1}, \ldots, r_{\delta_k})$. Define $c_r(n)$ by 
\begin{align*}
	\sum_{n=0}^{\infty}c_r(n)q^n:=\prod_{\delta|M}(q^{\delta};q^{\delta})^{r_{\delta}}_{\infty}=\prod_{\delta|M}\prod_{n=1}^{\infty}(1-q^{n \delta})^{r_{\delta}}.
\end{align*}
The approach to prove congruences for $c_r(n)$ developed by Radu \cite{radu1, radu2} reduces the number of coefficients that one must check as compared with the classical method which uses Sturm's bound alone.
\par 
Let $m$ be a positive integer. For any integer $s$, let $[s]_m$ denote the residue class of $s$ in $\mathbb{Z}_m:= \mathbb{Z}/ {m\mathbb{Z}}$. 
Let $\mathbb{Z}_m^{*}$ be the set of all invertible elements in $\mathbb{Z}_m$. Let $\mathbb{S}_m\subseteq\mathbb{Z}_m$  be the set of all squares in $\mathbb{Z}_m^{*}$. For $t\in\{0, 1, \ldots, m-1\}$
and $r \in R(M)$, we define a subset $P_{m,r}(t)\subseteq\{0, 1, \ldots, m-1\}$ by
\begin{align*}
	P_{m,r}(t):=\left\{t': \exists [s]_{24m}\in \mathbb{S}_{24m} ~ \text{such} ~ \text{that} ~ t'\equiv ts+\frac{s-1}{24}\sum_{\delta|M}\delta r_\delta \pmod{m} \right\}.
\end{align*}
\begin{definition}
	Suppose $m, M$, and $N$ are positive integers, $r=(r_{\delta})\in R(M)$, and $t\in \{0, 1, \ldots, m-1\}$. Let $k=k(m):=\gcd(m^2-1,24)$ and write  
	\begin{align*}
		\prod_{\delta|M}\delta^{|r_{\delta}|}=2^s\cdot j,
	\end{align*}
	where $s$ and $j$  are nonnegative integers with $j$ odd. The set $\Delta^{*}$ consists of all tuples $(m, M, N, (r_{\delta}), t)$ satisfying these conditions and all of the following.
	\begin{enumerate}
		\item Each prime divisor of $m$ is also a divisor of $N$.
		\item $\delta|M$ implies $\delta|mN$ for every $\delta\geq1$ such that $r_{\delta} \neq 0$.
		\item $kN\sum_{\delta|M}r_{\delta} mN/\delta \equiv 0 \pmod{24}$.
		\item $kN\sum_{\delta|M}r_{\delta} \equiv 0 \pmod{8}$.  
		\item  $\frac{24m}{\gcd{(-24kt-k{\sum_{{\delta}|M}}{\delta r_{\delta}}},24m)}$ divides $N$.
		\item If $2|m$, then either $4|kN$ and $8|sN$ or $2|s$ and $8|(1-j)N$.
	\end{enumerate}
\end{definition}
Let $m, M, N$ be positive integers. For $\gamma=
\begin{bmatrix}
	a  &  b \\
	c  &  d     
\end{bmatrix} \in \text{SL}_2(\mathbb{Z})$, $r\in R(M)$ and $r'\in R(N)$, set 
\begin{align*}
	p_{m,r}(\gamma):=\min_{\lambda\in\{0, 1, \ldots, m-1\}}\frac{1}{24}\sum_{\delta|M}r_{\delta}\frac{\gcd^2(\delta a+ \delta k\lambda c, mc)}{\delta m}
\end{align*}
and 
\begin{align*}
	p_{r'}^{*}(\gamma):=\frac{1}{24}\sum_{\delta|N}r'_{\delta}\frac{\gcd^2(\delta,c)}{\delta}.
\end{align*}
\begin{lemma}\label{lem1}\cite[Lemma 4.5]{radu1} Let $u$ be a positive integer, $(m, M, N, r=(r_{\delta}), t)\in\Delta^{*}$ and $r'=(r'_{\delta})\in R(N)$. 
	Let $\{\gamma_1,\gamma_2, \ldots, \gamma_n\}\subseteq \text{SL}_2(\mathbb{Z})$ be a complete set of representatives of the double cosets of $\Gamma_{0}(N) \backslash \text{SL}_2(\mathbb{Z})/ \Gamma_\infty$. Assume that $p_{m,r}(\gamma_i)+p_{r'}^{*}(\gamma_i) \geq 0$ for all $1 \leq i \leq n$. Let $t_{min}=\min_{t' \in P_{m,r}(t)} t'$ and
	\begin{align*}
		\nu:= \frac{1}{24}\left\{ \left( \sum_{\delta|M}r_{\delta}+\sum_{\delta|N}r'_{\delta}\right)[\text{SL}_2(\mathbb{Z}):\Gamma_{0}(N)] -\sum_{\delta|N} \delta r'_{\delta}\right\}-\frac{1}{24m}\sum_{\delta|M}\delta r_{\delta} 
		- \frac{ t_{min}}{m}.
	\end{align*}	
	If the congruence $c_r(mn+t')\equiv0\pmod u$ holds for all $t' \in P_{m,r}(t)$ and $0\leq n\leq \lfloor\nu\rfloor$, then it holds for all $t'\in P_{m,r}(t)$ and $n\geq0$.
\end{lemma}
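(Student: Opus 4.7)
The plan is to recast the statement as a modularity-plus-Sturm-bound argument. First, I would form the eta-quotient $E(z):=\prod_{\delta\mid M}\eta(\delta z)^{r_\delta}=q^{\frac{1}{24}\sum_\delta \delta r_\delta}\sum_n c_r(n)q^n$ and perform an $m$-dissection via $E_t(z):=\frac{1}{m}\sum_{\lambda=0}^{m-1}\zeta_m^{-\lambda(t+\frac{1}{24}\sum \delta r_\delta)}E\bigl(z+\lambda/m\bigr)$, where $\zeta_m=e^{2\pi i/m}$, to isolate the contribution of the arithmetic progression $n\equiv t\pmod m$. The subtlety of tracking the $q^{1/24}$ prefactor through the translate $z\mapsto z+\lambda/m$ is precisely what gives rise to the set $P_{m,r}(t)$: under the induced action of $\text{SL}_2(\mathbb{Z})$ on dissected series, the residue $t$ is mixed by multiplication with elements of $\mathbb{S}_{24m}$, which is exactly the orbit appearing in the definition of $P_{m,r}(t)$.

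Second, I would multiply the orbit sum $\sum_{t'\in P_{m,r}(t)}E_{t'}(z)$ by the auxiliary eta-quotient $G(z):=\prod_{\delta\mid N}\eta(\delta z)^{r'_\delta}$ and verify, using Theorem \ref{thm_ono1} together with the defining conditions (1)--(6) of $\Delta^*$, that the resulting function $F$ transforms correctly under $\Gamma_0(N)$ with a Nebentypus character. Conditions (3)--(4) control the weight integrality and the $24$-divisibility of the $q$-exponent; (1), (2), (5), (6) enforce compatibility of the dissection with level $N$ and the relevant $2$-adic constraints on eta multipliers. Next, a direct application of Theorem \ref{thm_ono2}, combined with the minimum over the dissection index $\lambda$, shows that the order of vanishing of $F$ at the cusp represented by $\gamma_i$ is exactly $p_{m,r}(\gamma_i)+p^*_{r'}(\gamma_i)$. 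The hypothesis that this sum is nonnegative for all $i$ then places $F$ in $M_\ell(\Gamma_0(N),\chi)$ with weight $\ell=\tfrac12(\sum r_\delta+\sum r'_\delta)$.

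Third, I would apply a Sturm-type congruence bound to $F$. The standard Sturm bound for weight $\ell$ on $\Gamma_0(N)$ is $\tfrac{\ell}{12}[\text{SL}_2(\mathbb{Z}):\Gamma_0(N)]$. Undoing the change of variable (the dissected series lives naturally in $q^{1/m}$), accounting for the contribution $-\tfrac{1}{24}\sum\delta r'_\delta$ from normalizing $G$, and subtracting the leading $q$-power $\tfrac{1}{24m}\sum\delta r_\delta+\tfrac{t_{\min}}{m}$ of the $E_{t'}$-factor, simplifies after algebra to exactly $\lfloor\nu\rfloor$. Hence the vanishing of $c_r(mn+t')\pmod u$ for all $t'\in P_{m,r}(t)$ and $0\le n\le\lfloor\nu\rfloor$ forces $F\equiv 0\pmod u$ as a formal power series, which yields the congruence for every $n\ge 0$.

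The main obstacle will be the bookkeeping in the first two steps: faithfully tracking the $\eta$ multiplier system through the $m$-dissection (this is precisely what forces the mod-$24m$ structure and singles out the square-class orbit $P_{m,r}(t)$) and producing a cusp-by-cusp order formula matching $p_{m,r}(\gamma_i)+p^*_{r'}(\gamma_i)$. Once the modular-form setup is correct, extracting $\nu$ from the Sturm bound is essentially bookkeeping with leading $q$-powers and the index formula for $\Gamma_0(N)$.
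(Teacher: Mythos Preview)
The paper does not give a proof of this lemma: it is quoted verbatim from Radu \cite[Lemma 4.5]{radu1} and used as a black box in the proof of Theorem \ref{Thm2}. So there is no in-paper argument to compare against.

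That said, your outline is essentially a correct sketch of Radu's original proof: dissect the eta-quotient along the progression, recognise that the residues $t'$ reachable under $\Gamma_0(N)$ form the orbit $P_{m,r}(t)$, multiply by the auxiliary eta-quotient $G$ to obtain a genuine modular form on $\Gamma_0(N)$, check holomorphicity at every cusp via the quantities $p_{m,r}(\gamma_i)+p_{r'}^*(\gamma_i)$, and then invoke Sturm. A few points to tighten. First, Theorem \ref{thm_ono1} applies only to pure eta-quotients, so it cannot be invoked for the dissected pieces $E_{t'}$; the modularity of the orbit combination really does require the explicit multiplier computation you flag as ``the main obstacle,'' and conditions (1)--(6) of $\Delta^*$ are exactly what make that computation close up. Second, the quantity $p_{m,r}(\gamma_i)+p_{r'}^*(\gamma_i)$ is a \emph{lower bound} for the order at the cusp, not the exact order; that is all you need for holomorphicity. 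Third, Theorem \ref{Sturm} as stated in the paper is for a prime modulus; to get the conclusion modulo an arbitrary $u$ you must reduce to prime-power moduli via the Chinese remainder theorem (Radu does this). Finally, your use of an orbit \emph{sum} is fine here because the $E_{t'}$ for distinct $t'\in\{0,\dots,m-1\}$ are supported on disjoint $q$-exponent classes, so divisibility of the sum forces divisibility of each summand; this is consistent with the appearance of $t_{\min}$ (rather than $\sum t'$) in the formula for $\nu$.
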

To apply Lemma \ref{lem1}, we utilize the following result, which gives us a complete set of representatives of the double coset in $\Gamma_{0}(N) \backslash \text{SL}_2(\mathbb{Z})/ \Gamma_\infty$. 
\begin{lemma}\label{lem2}\cite[Lemma 4.3]{wang} If $N$ or $\frac{1}{2}N$ is a square-free integer, then
	\begin{align*}
		\bigcup_{\delta|N}\Gamma_0(N)\begin{bmatrix}
			1  &  0 \\
			\delta  &  1      
		\end{bmatrix}\Gamma_ {\infty}=\text{SL}_2(\mathbb{Z}).
	\end{align*}
\end{lemma}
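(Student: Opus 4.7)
The plan is to reduce the stated decomposition to a cusp-counting argument. The starting observation is the natural bijection
\[
\Gamma_0(N)\backslash \text{SL}_2(\mathbb{Z})/\Gamma_\infty \;\longleftrightarrow\; \Gamma_0(N)\backslash \mathbb{P}^1(\mathbb{Q}), \qquad \gamma \mapsto \gamma\cdot\infty,
\]
which holds because the $\text{SL}_2(\mathbb{Z})$-stabilizer of $\infty$ is $\pm\Gamma_\infty$ and $-I \in \Gamma_0(N)$. The right-hand side is the cusp set of $\Gamma_0(N)$, so it suffices to show that the proposed matrices $M_\delta := \begin{bmatrix}1 & 0 \\ \delta & 1\end{bmatrix}$, for $\delta \mid N$, realize $\tau(N)$ pairwise $\Gamma_0(N)$-inequivalent cusps, where $\tau(N)$ denotes the number of divisors of $N$.

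Next I would invoke the classical enumeration of cusps of $\Gamma_0(N)$: every cusp is represented by a fraction $a/d$ with $d \mid N$ and $\gcd(a,d)=1$, and two such fractions with the same denominator $d$ are $\Gamma_0(N)$-equivalent iff $a \equiv \pm a' \pmod{\gcd(d, N/d)}$, giving the standard count $\sum_{d \mid N}\varphi(\gcd(d,N/d))$ after collapsing the $\pm 1$ identification. The hypothesis that $N$ or $N/2$ is square-free is exactly what forces $\gcd(d, N/d) \in \{1,2\}$ for every $d \mid N$: if $N$ is square-free the gcd is always $1$, and if $N/2$ is square-free the gcd is $1$ for odd $d$ and $2$ for even $d$. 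In either subcase the inner $\varphi$-factor contributes $1$, so the total cusp count collapses to $\tau(N)$.

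A direct computation then gives $M_\delta \cdot \infty = 1/\delta$, and the cusp-equivalence criterion forces $\gcd(c,N) = \gcd(c',N)$ whenever $a/c \sim_{\Gamma_0(N)} a'/c'$. Since $\gcd(\delta, N) = \delta$ takes distinct values across divisors of $N$, the cusps $\{1/\delta : \delta \mid N\}$ are pairwise inequivalent; having $\tau(N)$ distinct double-coset representatives in a space of size $\tau(N)$ forces the union on the left to exhaust $\text{SL}_2(\mathbb{Z})$.

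The main obstacle is the $N/2$ square-free case, where the even divisors $d \mid N$ give $\gcd(d,N/d)=2$ and one must verify that the residue class $[a] \in (\mathbb{Z}/2\mathbb{Z})^\times$ collapses to a single class under the $\pm 1$ identification; otherwise the cusp count inflates past $\tau(N)$ and the counting argument breaks. Once this parity bookkeeping is settled (by noting $1 \equiv -1 \pmod{2}$), the rest of the argument is a routine application of the cusp-equivalence criterion available in standard references such as Diamond–Shurman.
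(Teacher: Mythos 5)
The paper does not prove this lemma at all---it is quoted verbatim as \cite[Lemma 4.3]{wang} and used as a black box---so there is no in-paper argument to compare against; your proof has to stand on its own, and it does. The structure is sound: the identification of $\Gamma_0(N)\backslash \mathrm{SL}_2(\mathbb{Z})/\Gamma_\infty$ with the cusp set (valid because the stabilizer of $\infty$ is $\pm\Gamma_\infty$ and $-I\in\Gamma_0(N)$ is central, so the $\pm$ is absorbed), the count $\sum_{d\mid N}\varphi(\gcd(d,N/d))$ collapsing to $\tau(N)$ exactly when $N$ or $N/2$ is square-free, and the observation that $\gcd(c,N)$ is a $\Gamma_0(N)$-invariant of a cusp $a/c$ in lowest terms (since $\gamma=\left[\begin{smallmatrix}\alpha&\beta\\ N\gamma_0&\lambda\end{smallmatrix}\right]$ sends the denominator $c$ to $N\gamma_0 a+\lambda c$ with $\gcd(\lambda,N)=1$), which separates the $\tau(N)$ cusps $1/\delta$. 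One small inaccuracy: the equivalence criterion for two cusps $a/d$, $a'/d$ with the same denominator $d\mid N$ is $a\equiv a'\pmod{\gcd(d,N/d)}$ up to the unit action already encoded in the $\varphi$-count, not an extra ``$a\equiv\pm a'$'' identification; your worry about the count ``inflating'' or needing a parity fix in the $N/2$ square-free case is therefore a non-issue, and in any event $\varphi(1)=\varphi(2)=1$ makes it moot here. Net: the argument is correct and gives a clean, self-contained justification of a fact the paper merely imports.
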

The following lemma is an easy consequence of the Binomial theorem.
\begin{lemma}\label{lemma_binom}
For any prime $p$, we have
\begin{align*}
f^p_1&\equiv f_p\pmod{p},\\
f^{p^2}_1&\equiv f^p_p\pmod{p^2}.
\end{align*}
\end{lemma}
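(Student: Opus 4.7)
The plan is to apply the binomial theorem term by term to the infinite product $f_1=\prod_{j\ge 1}(1-q^j)$, viewed as an element of $\mathbb{Z}[[q]]$. Recall that for any prime $p$ and $1\le k\le p-1$ we have $p\mid\binom{p}{k}$, so
$$ (1-q^j)^p=\sum_{k=0}^{p}\binom{p}{k}(-q^j)^k\equiv 1-q^{jp}\pmod{p}. $$
Since the congruence holds coefficientwise and the product over $j\ge1$ converges in the $q$-adic topology, multiplying these congruences gives
$$ f_1^{p}=\prod_{j\ge1}(1-q^j)^p\equiv\prod_{j\ge1}(1-q^{jp})=f_p\pmod{p}, $$
which is the first statement.

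For the second congruence I would bootstrap from the first. Write $f_1^p=f_p+p\,h(q)$ for some $h(q)\in\mathbb{Z}[[q]]$, which is permissible because the first congruence says exactly that every coefficient of $f_1^p-f_p$ is divisible by $p$. Raising both sides to the $p$-th power and expanding by the binomial theorem,
$$ f_1^{p^2}=(f_p+p\,h)^p=f_p^{p}+\binom{p}{1}f_p^{p-1}(p\,h)+\sum_{k=2}^{p}\binom{p}{k}f_p^{p-k}(p\,h)^k. $$
The $k=1$ term equals $p^2 f_p^{p-1}h$, and for $k\ge 2$ the factor $(p\,h)^k$ is already divisible by $p^2$. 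Hence every term after the first vanishes modulo $p^2$, giving $f_1^{p^2}\equiv f_p^{p}\pmod{p^2}$.

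There is no real obstacle here; the only care needed is to justify that congruences between formal power series in $\mathbb{Z}[[q]]$ can be manipulated like congruences between integers (coefficientwise), and that the infinite product $\prod_j (1-q^j)^p$ is well defined in the $q$-adic topology so that the coefficient of $q^n$ only depends on finitely many factors. Both points are standard, so the lemma follows directly from the two binomial expansions above.
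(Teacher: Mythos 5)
Your proof is correct and is exactly the standard binomial-theorem argument the paper has in mind (the paper simply states the lemma as ``an easy consequence of the Binomial theorem'' without writing out the details). Both the coefficientwise reduction $(1-q^j)^p\equiv 1-q^{jp}\pmod p$ and the bootstrap $f_1^{p^2}=(f_p+ph)^p\equiv f_p^p\pmod{p^2}$ are what is intended, so nothing further is needed.
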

We will frequently use the congruences in Lemma \ref{lemma_binom} for $p=2,3$ without explicitly mentioning them. The next lemma contains some known $2$- and $3$-dissections of certain $q$-products.
\begin{lemma}
We have
\begin{align}\label{1.1}
f_1f_3&=\frac{f_2f^2_8f^4_{12}}{f^2_4f_6f^2_{24}}-q\frac{f^4_4f_6f^2_{24}}{f_2f^2_8f^2_{12}},\\
\frac{f_3}{f^3_1}&=\frac{f^6_4f^3_6}{f^9_2f^2_{12}}+3q\frac{f^2_4f_6f^2_{12}}{f^7_2},\label{1.3}\\
f^3_1&=\frac{f_6f^6_9}{f_3f^3_{18}}-3qf^3_9+4q^3\frac{f^2_3f^6_{18}}{f^2_6f^3_9},\label{1.2}\\
\frac{f^2_1}{f_2}&=\frac{f^2_9}{f_{18}}-2q\frac{f_3f^2_{18}}{f_6f_9},\label{1.4}\\
\frac{f_2}{f^2_1}&=\frac{f^4_6f^6_9}{f^8_3f^3_{18}}+2q\frac{f^3_6f^3_9}{f^7_3}+4q^2\frac{f^2_6f^3_{18}}{f^6_3},\label{1.5}\\
\frac{1}{f^3_1}&=\frac{f^3_9}{f^{10}_3}\left(c^2(q^3)+3qc(q^3)\frac{f^3_9}{f_3}+9q^2\frac{f^6_9}{f^2_3}\right), \label{1.6}
\end{align}
where $\displaystyle c(q):=\sum_{m,n\in\mathbb{Z}}q^{m^2+mn+n^2}$.
\end{lemma}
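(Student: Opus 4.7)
The plan is to establish each of (1.1)--(1.6) by recognizing it as a standard 2- or 3-dissection of a classical $q$-product, and either citing the result from the literature or giving a short self-contained derivation from Jacobi's triple product and Ramanujan's theta-function identities. All six formulas already appear, up to trivial rearrangement, in Hirschhorn's monograph \emph{The Power of $q$} and in Berndt's accounts of Ramanujan's notebooks, and several have been reused verbatim in recent work on partition congruences; I would cite those occurrences directly rather than reprove each identity from scratch.

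For the 2-dissections (1.1) and (1.3), my starting point is Ramanujan's theta function $f(a,b) := \sum_{n\in\mathbb{Z}} a^{n(n+1)/2} b^{n(n-1)/2}$ together with its Jacobi triple product form and its addition theorem. Applied to $f(-q) = f_1$ and $f(-q^3) = f_3$, and separating the product $f_1 f_3$ by parity of the summation index, the addition formula yields (1.1). For (1.3), one combines the standard 2-dissection of $1/f_1^2$ with the 2-dissection of $f_3 / f_1$; routine manipulation of the resulting eta-quotients, followed by clearing common factors of $f_2$, $f_4$, $f_{12}$, produces the displayed right-hand side.

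For the 3-dissections (1.2) and (1.4), the inputs are Jacobi's identity $f_1^3 = \sum_{n\geq 0}(-1)^n (2n+1) q^{n(n+1)/2}$ and the classical $\varphi(-q) = f_1^2/f_2 = \sum_{n\in\mathbb{Z}}(-1)^n q^{n^2}$. The exponents $n(n+1)/2$ and $n^2$ fall into at most two residue classes modulo $3$, so grouping each summation by residue and re-expressing the sub-sums via the triple product delivers the stated eta-quotient formulas (three nonzero terms for $f_1^3$, since the residue-$0$ part further splits, and two for $\varphi(-q)$). Identity (1.5) is the reciprocal 3-dissection of (1.4), and can be obtained either by inversion in the formal power series ring or by direct citation. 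Finally, (1.6) is Hirschhorn's 3-dissection of $1/f_1^3$ in terms of the Borwein cubic theta function $c(q)$, derived from the cubic theta identity $a(q)^3 = b(q)^3 + c(q)^3$ together with the eta-quotient representations of $a(q)$, $b(q)$, $c(q)$.

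The main obstacle is bookkeeping rather than mathematical depth: I must ensure that the normalizations of $\varphi$, $\psi$, $c(q)$ and the indexing of the eta-quotients in each cited source match the conventions used in this paper. The most delicate case is (1.6), where $c(q^3)$ enters both linearly and quadratically on the right-hand side, so any sign or constant-factor discrepancy would propagate through the later modular-forms arguments; I would verify this identity by expanding both sides to a high order in $q$ as an independent sanity check before relying on it downstream.
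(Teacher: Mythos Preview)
Your approach is essentially the same as the paper's: each identity is sourced directly from Hirschhorn's \emph{The Power of $q$}, with the paper supplying the precise equation numbers rather than sketching derivations. The only minor difference is that for \eqref{1.5} the paper obtains it from \eqref{1.4} by the roots-of-unity trick (replace $q$ by $\omega q$ and $\omega^2 q$, $\omega$ a primitive cube root of unity, and multiply the two results) rather than by formal power-series inversion.
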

\begin{proof}
Equation \eqref{1.1} is \cite[eq. (30.12.1)]{hirschhorn}. By replacing $q$ with $-q$ in \cite[eq. (22.6.2)]{hirschhorn}, we obtain \eqref{1.3}. Identity \eqref{1.2} is the $3$-dissection formula for triangular numbers \cite[eq. (14.8.5)]{hirschhorn}. Equation \eqref{1.4} is \cite[eq. (14.3.2)]{hirschhorn}. We can deduce \eqref{1.5} from \eqref{1.4} by replacing $q$ with $\omega q$ and $\omega^2q$ and multiplying the two results. Here, $\omega$ is a primitive third root of unity. The last identity \eqref{1.6} is \cite[eq. (39.2.8)]{hirschhorn}.
\end{proof}
%%%%%%%%%%%%%%%%%%%%%%%%%%%%
\section{Proof of Theorem \ref{Thm1}}
In this section we present the proof of Theorem \ref{Thm1}.
\begin{proof}[Proof of Theorem \ref{Thm1}] We first recall the Jacobi's identity \cite[Theorem 1.3.9]{berndt}
\begin{align}\label{3.0}
f^3_1=\sum_{n=0}^{\infty}(-1)^n(2n+1)q^{n(n+1)/2}.	
\end{align}
From \cite[eq. (5.6)]{Lin2018}, we have
\begin{align}\label{3.1}
\sum_{n=0}^{\infty}\text{PDO}_t(3n)q^n\equiv4qf^3_2f^3_6\pmod{8}.
\end{align}
Extracting the terms with odd powers of $q$ on each side of \eqref{3.1}, and dividing both sides by $q$, and then substituting $q^2$ by $q$, we deduce that
\begin{align}\label{3.2}
\sum_{n=0}^{\infty}\text{PDO}_t(6n+3)q^n\equiv4f^3_1f^3_3\pmod{8}.
\end{align}
Invoking \eqref{3.0} in \eqref{3.2} yields
\begin{align*}
\sum_{n=0}^{\infty}\text{PDO}_t(6n+3)q^n\equiv4\cdot\sum_{m=0}^{\infty}q^{\frac{m(m+1)}{2}}\cdot\sum_{n=0}^{\infty}q^{\frac{3n(n+1)}{2}}\pmod{8},
\end{align*}
which we rewrite as
\begin{align}\label{3.3}
\sum_{n=0}^{\infty}\text{PDO}_t(6n+3)q^n\equiv4\cdot\sum_{m,n\geq0}q^{\frac{m(m+1)}{2}+\frac{3n(n+1)}{2}}\pmod{8}.
\end{align}
For any prime $p\geq3$, the congruence
\begin{align*}
\frac{m(m+1)}{2}+\frac{3n(n+1)}{2}\equiv pk+\frac{p^2-1}{2}\pmod{p^2}
\end{align*}
is equivalent to
\begin{align}\label{eqn-neq-01}
\left(m+\frac{1}{2}\right)^2+3\left(n+\frac{1}{2}\right)^2\equiv 2pk\pmod{p^2}. 
\end{align}
Since $p\geq 3$, we can rewrite \eqref{eqn-neq-01} as
\begin{align}\label{3.5}
x^2+3y^2\equiv 2pk\pmod{p^2},
\end{align}
for some $x,y\in\mathbb{Z}$. From \eqref{3.5}, we obtain
\begin{align}\label{3.6}
x^2+3y^2\equiv 0\pmod{p}.
\end{align}
We notice from \eqref{3.6} that $p\mid x$ if and only if $p\mid y$. Now, if $p\mid x$ and $p\mid y$, it follows from \eqref{3.5} that $p\mid k$. If $p\nmid x$ and $p\nmid y$, then from \eqref{3.6} we find that 
\begin{align*}
\frac{x^2}{y^2}\equiv-3\pmod{p},
\end{align*}
which is true if $-3$ is a quadratic residue modulo $p$. Therefore, $x$ and $y$ in \eqref{3.5} exist only if either $p\mid k$ or $-3$ is a quadratic residue modulo $p$. In other words, no such $x$ and $y$ exist if $-3$ is a quadratic nonresidue modulo $p$ and $1\leq k\leq p-1$. Also, $-3$ is a quadratic nonresidue modulo $p$ precisely for the primes $p\equiv-1\pmod{6}$. Hence, employing above analysis in \eqref{3.3}, we conclude that for all $n\geq0$,
\begin{align*}
\text{PDO}_t\left(6\left(p^2n+pk+\frac{p^2-1}{2}\right)+3\right)\equiv0\pmod{8}, 
\end{align*}
for any prime $p\equiv-1\pmod{6}$ and $1\leq k\leq p-1$. This completes the proof of \eqref{Thm1_1}.
\par Next, we prove \eqref{Thm1_2}. 
From \eqref{genfn_6n}, we have
\begin{align}\label{3.9}
\sum_{n=0}^{\infty}\text{PDO}_t(6n)q^n\equiv16qf^7_2f_6(f_1f_3)\pmod{32}.
\end{align}
Substituting \eqref{1.1} in \eqref{3.9}, we obtain
\begin{align}\label{3.10}
\sum_{n=0}^{\infty}\text{PDO}_t(6n)q^n\equiv16\left(qf^{12}_2+q^2f^6_2f^6_6\right)\pmod{32}.
\end{align}
Extracting the terms with even powers of $q$ on each side of \eqref{3.10}, and substituting $q^2$ by $q$ yields
\begin{align}\label{3.11}
\sum_{n=0}^{\infty}\text{PDO}_t(12n)q^n\equiv16qf^3_2f^3_6\pmod{32}.
\end{align}
Extracting the terms with odd powers of $q$ on each side of \eqref{3.11}, and dividing both sides by $q$ and then substituting $q^2$ by $q$, we obtain
\begin{align*}
\sum_{n=0}^{\infty}\text{PDO}_t(24n+12)q^n\equiv16qf^3_1f^3_3\pmod{32}.
\end{align*}
From here, the proof of \eqref{Thm1_2} goes on the similar steps as shown in the proof of \eqref{Thm1_1}. This completes the proof of Theorem \ref{Thm1}.
\end{proof}
%%%%%%%%%%%%%%%%%%%%%%%%%%%%
\begin{proof}[Proof of Corollary \ref{Thm1_Cor}]
Substituting the value of $f^3_2$ obtained from \eqref{1.2} in \eqref{3.1}, we obtain
\begin{align}\label{3.14}
\sum_{n=0}^{\infty}\text{PDO}_t(3n)q^n\equiv4qf^4_6-12q^3f^3_6f^3_{18}\pmod{8}.
\end{align}
Extracting those terms of the form $q^{3n}$ on both sides of \eqref{3.14} and replacing $q^3$ by $q$, we arrive at
\begin{align}\label{3.15}
\sum_{n=0}^{\infty}\text{PDO}_t(9n)q^n\equiv4qf^3_2f^3_6\pmod{8}.
\end{align}
From \eqref{3.1} and \eqref{3.15}, we conclude that for all $n\geq0$,	
\begin{align*}
\text{PDO}_t(3n)\equiv\text{PDO}_t(9n)\pmod{8},
\end{align*}
which by induction implies that for all $\ell\geq1$ and $n\geq0$,
\begin{align}\label{3.16}
\text{PDO}_t(3n)\equiv\text{PDO}_t(3^{\ell}n)\pmod{8}.
\end{align}
Then, \eqref{Thm1_Cor1} follows from \eqref{Thm1_1} and \eqref{3.16}.
\par Next, we prove \eqref{Thm1_Cor2}. Substituting the value of $f^3_2$ obtained from \eqref{1.2} in \eqref{3.11}, and then extracting those terms of the form $q^{3n}$ on both sides, we obtain
\begin{align}\label{3.17}
\sum_{n=0}^{\infty}\text{PDO}_t(36n)q^{3n}\equiv16q^3f^3_6f^3_{18}\pmod{32}.
\end{align}
Replacing $q^3$ by $q$ in \eqref{3.17} yields
\begin{align}\label{3.18}
\sum_{n=0}^{\infty}\text{PDO}_t(36n)q^{n}\equiv16qf^3_2f^3_6\pmod{32}.
\end{align}
From \eqref{3.11} and \eqref{3.18}, we conclude that for all $n\geq0$,	
\begin{align*}
\text{PDO}_t(12n)\equiv\text{PDO}_t(36n)\pmod{32},
\end{align*}
which by induction implies that for all $\ell\geq1$ and $n\geq0$,
\begin{align}\label{3.19}
\text{PDO}_t(12n)\equiv\text{PDO}_t(3^{\ell}\cdot12n)\pmod{32}.
\end{align}
Then, \eqref{Thm1_Cor2} follows from \eqref{Thm1_2} and \eqref{3.19}. This completes the proof of the corollary.
\end{proof}
%%%%%%%%%%%%%%%%%%%%%%%%%%%%
\section{Proof of Theorem \ref{Thm2}}
In this section we prove Theorem \ref{Thm2}. 
\begin{proof}[Proof of Theorem \ref{Thm2}]
Define
\begin{align}\label{4.1}
\sum_{n=0}^{\infty}a(n)q^n:=\frac{f_2f^2_3f^2_{12}}{f^2_1f_6}.
\end{align}
From \eqref{gen_fn_PDOt} and \eqref{4.1}, we find that, for all $n\geq0$,
\begin{align}\label{4.2}
\text{PDO}_t(n+1)=a(n).
\end{align}
For all the values of $m$ and $t$ listed in Table \ref{Table} and $M=12$, $N=12$, and $r=(-2,1,2,0,-1,2)$, we verify that $(m,M,N,r,t) \in \Delta^{*}$. By Lemma \ref{lem2}, we know that the set $\left\{\begin{bmatrix}
	1  &  0 \\
	\delta  &  1      
\end{bmatrix}:\delta|12 \right\}$ forms a complete set of double coset representatives of $\Gamma_{0}(12) \backslash \text{SL}_2(\mathbb{Z})/ \Gamma_\infty$. Let $\gamma_{\delta}=\begin{bmatrix}
1  &  0 \\
\delta  &  1      
\end{bmatrix}$. For every triple $(m,t,r')$ from Table \ref{Table}, we use \texttt{SageMath} to verify that $p_{m,r}(\gamma_{\delta})+p_{r'}^{*}(\gamma_{\delta}) \geq 0$ for each $\delta\mid12$. We find that for each case the set $P_{m,r}(t)=\left\lbrace t\right\rbrace $. We then compute the upper bound $\lfloor\nu\rfloor$ in each case, as in Lemma \ref{lem1}. Using \texttt{SageMath}, we verify that $a(mn+t)\equiv0\pmod{u}$ for all $m,t,u$ in Table \ref{Table} and for $n\leq \lfloor\nu\rfloor$. By Lemma \ref{lem1}, we derive that $a(mn+t)\equiv0\pmod{u}$ for all $n\geq 0$, and for all $m,t,u$ mentioned in Table \ref{Table}. Then, from \eqref{4.2}, we conclude that for all $n\geq0$ and $m,t,u$ in Table \ref{Table},
\begin{align*}
\text{PDO}_t(mn+t+1)\equiv0\pmod{u}.
\end{align*}
This proves all the congruences \eqref{0.00}-\eqref{0.016}.
\end{proof}
\begin{center}
\begin{table}[hbt!]
\caption{}
\begin{tabular}{|p{.4cm}|p{0.4cm}|p{2.2cm}|p{0.5cm}|p{0.5cm}|p{.4cm}|p{0.5cm}|p{2.3cm}|p{0.5cm}|p{0.5cm}|}
%\multicolumn{4}{|c|}{Country List} \\
\hline
$m$ & $t$ & $r'$ & $\lfloor\nu\rfloor$ & $u$ & $m$ & $t$ & $r'$ & $\lfloor\nu\rfloor$ & $u$\\
\hline
$6$ & $2$ & $(5,0,0,0,0,0)$ & $6$ & $4$ & $48$ & $11$ & $(40,0,0,0,0,0)$ & $40$ & $16$ \\
 & $5$ &  & $6$ & $8$& & $23$ &  & $39$ & $32$\\
 & & & & & & $35$ &  & $39$ & $16$\\
 & & & & & & $47$ &  & $39$ & $64$ \\
\hline
$12$ & $2$ & $(10,0,0,0,0,0)$ & $11$ & $4$ & $96$ & $23$ & $(80,0,0,0,0,0)$ & $78$ & $32$\\
& $5$ &  & $11$ & $8$ & & $47$ &  & $78$ & $64$\\
& $8$ &  & $10$ & $4$ & & $71$ &  & $77$ & $32$\\
& $11$ &  & $10$ & $16$ & & $95$ &  & $77$ & $128$\\	
\hline	
$24$ & $5$ & $(20,0,0,0,0,0)$ & $20$ & $8$&$192$ & $47$ & $(160,0,0,0,0,0)$ & $155$ & $64$\\
& $11$ &  & $20$ & $16$& & $95$ &  & $154$ & $128$\\
& $17$ &  & $20$ & $8$& & $143$ &  & $154$ & $64$\\
& $23$ &  & $20$ & $32$& & $191$ &  & $154$ & $256$\\	
\hline	
\end{tabular}
\label{Table}
\end{table}
\end{center}
%%%%%%%%%%%%%%%%%%%%%%%%%%%%%%%%%%
\section{$\text{PDO}_t(n)$ modulo powers of $3$}
In this section we prove our results on $\text{PDO}_t(n)$ modulo powers of $3$. We first prove Theorem \ref{Thm3}. 
\begin{proof}[Proof of Theorem \ref{Thm3}] 
From \cite[p. 16]{Lin2018}, we have
\begin{align}\label{5.2}
\sum_{n=0}^{\infty}\text{PDO}_t(8n)q^n&\equiv2^2\cdot3^2q\frac{f^8_2f^7_3}{f^{13}_1}\nonumber \\ 
&\equiv2^2\cdot3^2q\frac{f^9_2f^7_3}{f^{18}_1}\left(\frac{f^5_1}{f_2}\right) \nonumber \\
&\equiv2^2\cdot3^2qf^3_6f_3\left(f^3_1\right) \left(\frac{f^2_1}{f_2}\right)\pmod{3^4}.
\end{align}
Invoking \eqref{1.2} and \eqref{1.4} in \eqref{5.2}, and then extracting those terms of the form $q^{3n}$ from both sides, we obtain
\begin{align}\label{5.3}
\sum_{n=0}^{\infty}\text{PDO}_t(8\cdot3n)q^{3n}\equiv2^3\cdot3^3q^3f^2_3f^2_6f^2_9f^2_{18}\pmod{3^4}.
\end{align}
Replacing $q^3$ by $q$ in \eqref{5.3} yields \eqref{0.3}.
\par From \eqref{genfn_6n}, we have
\begin{align}\label{5.5}
\sum_{n=0}^{\infty}\text{PDO}_t(6n)q^{n}=2^4qf^4_2f^4_4\left(\frac{f_3}{f^3_1}\right)^3.
\end{align}
Substituting \eqref{1.3} in \eqref{5.5}, and extracting those terms of the form $q^{2n}$ from both sides, we arrive at
\begin{align}\label{5.6}
\sum_{n=0}^{\infty}\text{PDO}_t(12n)q^{2n}=2^4\cdot3^3q^4\frac{f^{10}_4f^3_6f^6_{12}}{f^{17}_2}+2^4\cdot3^2q^2\frac{f^{18}_4f^7_6}{f^{21}_2f^2_{12}}.
\end{align}
Substituting $q^2$ by $q$ in \eqref{5.6} and taking modulo $3^4$, we obtain
\begin{align}\label{5.7}
\sum_{n=0}^{\infty}\text{PDO}_t(12n)q^{n}&\equiv2^4\cdot3^3q^2\frac{f^{9}_2f^3_3f^6_{6}}{f^{15}_1}\left(\frac{f_2}{f^2_1}\right) +2^4\cdot3^2q\frac{f^{18}_2f^7_3}{f^{18}_1f^2_{6}}\left( \frac{1}{f^3_1}\right)\nonumber\\
&\equiv 2^4\cdot3^3q^2\frac{f^9_{6}}{f^{2}_3}\left(\frac{f_2}{f^2_1}\right) +2^4\cdot3^2qf_3f^4_6\left( \frac{1}{f^3_1}\right)\pmod{3^4}.
\end{align}
Using \eqref{1.5} and \eqref{1.6} in \eqref{5.7}, and extracting terms of the form $q^{3n}$, we arrive at
\begin{align}\label{5.8}
\sum_{n=0}^{\infty}\text{PDO}_t(12\cdot3n)q^{3n}\equiv2^5\cdot3^3q^3f^{12}_6+      2^4\cdot3^4q^3\frac{f^4_6f^9_9}{f^{11}_3}\pmod{3^4}.
\end{align}
Finally, substituting $q^3$ by $q$ in \eqref{5.8} yields \eqref{0.4}.
\end{proof}
%%%%%%%%%%%%%%%%%%%%%%%%%%%%%%%%%%%%%%%%%%%%%%%%
\begin{proof}[Proof of Theorem \ref{Thm4}]
From \cite[p. 16]{Lin2018}, we have
\begin{align*}
F_1(z):=\sum_{n=0}^{\infty}\text{PDO}_t(8n)q^n=36q\frac{f^8_2f^7_3}{f^{13}_1}.
\end{align*}
For $k\geq0$, we define
\begin{align*}
A_{k,1}(z):=36\frac{\eta^{3^{k+3}-13}(z)\eta^8(2z)}{\eta^{3^{k+2}-7}(3z)}
\end{align*}
and
\begin{align*}
B_{k,1}(z):=2^{k+2}\cdot3^{k+2}\frac{\eta^{3^{k+3}-13}(z)\eta^8(2z)}{\eta^{3^{k+2}-7}(3z)}.
\end{align*}
For $k\geq0$, we have
\begin{align*}
A_{k,1}(z)\equiv F_1(z)\pmod{3^{k+3}}
\end{align*}
and
\begin{align}\label{6.0a}
B_{k,1}(z)\equiv 2^{k+2}\cdot3^{k+2}qf^2_1f^2_2f^2_3f^2_6\pmod{3^{k+3}}.
\end{align}
We prove that both $A_{k,1}(z)$ and $B_{k,1}(z)$ are modular forms of level $18$ and weight $3^{k+2}+1$ with trivial character $\chi_0$ modulo $18$. By Theorem \ref{thm_ono1}, $A_{k,1}(z)$ and $B_{k,1}(z)$ are eta-quotients of level $N=18$. The cusps of $\Gamma_{0}(18)$ are represented by $\frac{c}{d}$, where $d\mid18$ and $\gcd(c,d)=1$, see for example \cite[p. 5]{ono1996}. By Theorem \ref{thm_ono2}, $A_{k,1}(z)$ is holomorphic at the cusp $\frac{c}{d}$ if and only if
\begin{align*}
(3^{k+3}-13)+\frac{\gcd(d,2)^2}{2}(8)-\frac{\gcd(d,3)^2}{3}(3^{k+2}-7)\geq 0,
\end{align*}
which is true since 
\begin{align*}
(3^{k+3}-13)-\frac{\gcd(d,3)^2}{3}(3^{k+2}-7)\geq0,
\end{align*}
for every $d\mid18$. Similarly, $B_{k,1}(z)$ is holomorphic at every cusp. Thus, by Theorems \ref{thm_ono1} and \ref{thm_ono2}, we find that $A_{k,1}(z),B_{k,1}(z)\in M_{3^{k+2}+1}\left(\Gamma_{0}(18), \chi_{0}\right)$. 
\par Now, we define
\begin{align*}
F_{k,1}(z):=A_{k,1}(z)\mid U^{k}(3),
\end{align*}
where $U^k$ means applying $U$-operator $k$ times. Since $3$ divides the level of the modular form $A_{k,1}(z)$, by Proposition \ref{lemma_U_operator}, $F_{k,1}(z)\in M_{3^{k+2}+1}\left(\Gamma_{0}(18), \chi_{0}\right)$. By Theorem \ref{Sturm}, the Sturm bound for $M_{3^{k+2}+1}\left(\Gamma_{0}(18), \chi_{0}\right)$ is $3(3^{k+2}+1)$. 
\par We have
\begin{align}\label{6.0}
F_{k,1}(z)&=A_{k,1}(z)\mid U^{k}(3)\nonumber \\
          &\equiv F_1(z)\mid U^{k}(3)\nonumber 
          \\
          &\equiv\sum_{n=0}^{\infty}\text{PDO}_t(8\cdot3^kn)q^n\pmod{3^{k+3}}.
\end{align}
From \eqref{6.0a} and \eqref{6.0}, we find that
\begin{align}\label{eqn-new-02}
F_{k,1}(z)\equiv B_{k,1}(z)\pmod{3^{k+3}}
\end{align}
if and only if
\begin{align}\label{6.1}
\sum_{n=0}^{\infty}\text{PDO}_t(8\cdot3^kn)q^n&\equiv2^{k+2}\cdot3^{k+2}qf^2_1f^2_2f^2_3f^2_6\pmod{3^{k+3}}.
\end{align}
Therefore, to prove the congruence \eqref{0.5}, it is enough to establish \eqref{eqn-new-02} for $k=2$. In view of Theorem \ref{Sturm}, we verify \eqref{eqn-new-02} for $k=2$ up to the Sturm bound using \texttt{SageMath}, and this completes the proof of \eqref{0.5}.	
\par Next, we prove \eqref{0.6}. From \cite[eq. (5.4)]{Lin2018}, we have
\begin{align*}
F_2(z):=\sum_{n=0}^{\infty}\text{PDO}_t(4n)q^n=6q\frac{f^3_2f^2_3f^3_6}{f^6_1}.
\end{align*}
For $k\geq0$, we define
\begin{align*}
A_{k,2}(z):=6\frac{\eta^{3^{k+2}-6}(z)\eta^3(2z)\eta^3(6z)}{\eta^{3^{k+1}-2}(3z)}
\end{align*}
and
\begin{align*}
B_{k,2}(z):=2^{\beta_k}\cdot3^{k+1}\frac{\eta^{3^{k+2}-6}(z)\eta^3(2z)\eta^3(6z)}{\eta^{3^{k+1}-2}(3z)},
\end{align*}
where $\beta_k:=2k+1$, for even $k$ and $\beta_k:=0$, for odd $k$. For all $k\geq0$, we have
\begin{align*}
A_{k,2}(z)\equiv F_2(z)\pmod{3^{k+2}},
\end{align*}
and
\begin{align}\label{6.1a}
B_{k,2}(z)\equiv2^{\beta_k}\cdot3^{k+1}qf^4_6\pmod{3^{k+2}}.
\end{align}
We prove that both $A_{k,2}(z)$ and $B_{k,2}(z)$ are modular forms of level $36$ and weight $3^{k+1}+1$ with trivial character $\chi_0$ modulo $36$. By Theorem \ref{thm_ono1}, $A_{k,2}(z)$ and $B_{k,2}(z)$ are eta-quotients of level $N=36$. The cusps of $\Gamma_{0}(36)$ are represented by $\frac{c}{d}$, where $d\mid36$ and $\gcd(c,d)=1$. By Theorem \ref{thm_ono2}, $A_{k,2}(z)$ is holomorphic at the cusp $\frac{c}{d}$ if and only if
\begin{align*}
(3^{k+2}-6)+\frac{\gcd(d,2)^2}{2}(3)-\frac{\gcd(d,3)^2}{3}(3^{k+1}-2)+\frac{\gcd(d,6)^2}{6}(3)\geq 0,
\end{align*}
which is true since 
\begin{align*}
(3^{k+2}-6)-\frac{\gcd(d,3)^2}{3}(3^{k+1}-2)\geq0,
\end{align*}
for every $d\mid36$. Similarly, $B_{k,2}(z)$ is holomorphic at every cusp. Thus, by Theorems \ref{thm_ono1} and \ref{thm_ono2}, we find that $A_{k,2}(z),B_{k,2}(z)\in M_{3^{k+1}+1}\left(\Gamma_{0}(36), \chi_{0}\right)$. 
\par Now, we define
\begin{align*}
F_{k,2}(z):=A_{k,2}(z)\mid U^{k}(3).
\end{align*}
Since $3$ divides the level of the modular form $A_{k,2}(z)$, by Proposition \ref{lemma_U_operator}, $F_{k,2}(z)\in M_{3^{k+1}+1}\left(\Gamma_{0}(36), \chi_{0}\right)$. By Theorem \ref{Sturm}, the Sturm bound for $M_{3^{k+1}+1}\left(\Gamma_{0}(36), \chi_{0}\right)$ is $6(3^{k+1}+1)$. 
\par We have
\begin{align}\label{6.1b}
F_{k,2}(z)&=A_{k,2}(z)\mid U^{k}(3)\nonumber\\
&\equiv F_2(z)\mid U^{k}(3)\nonumber\\
&\equiv \sum_{n=0}^{\infty}\text{PDO}_t(4\cdot3^kn)q^n\pmod{3^{k+2}}.
\end{align}
From \eqref{6.1a} and \eqref{6.1b}, we find that 
\begin{align}\label{6.1c}
F_{k,2}(z)\equiv B_{k,2}(z)\pmod{3^{k+2}}
\end{align}
if and only if
\begin{align}\label{6.1d}
\sum_{n=0}^{\infty}\text{PDO}_t(4\cdot3^kn)q^n&\equiv2^{\beta_k}\cdot3^{k+1}qf^4_6\pmod{3^{k+2}}.
\end{align}
Therefore, to prove the congruence \eqref{0.6}, it is enough to establish \eqref{6.1c} for $k=3$. In view of Theorem \ref{Sturm}, we verify \eqref{6.1c} for $k=3$ up to the Sturm bound using \texttt{SageMath}, and this completes the proof of \eqref{0.6}.
\end{proof}
\begin{remark}
We can establish \eqref{0.1}-\eqref{0.4} by using smaller values of $k$ in the above proof. Also, for higher values of $k$ verification of \eqref{6.1} and \eqref{6.1d} becomes difficult because of large Sturm bound. We verify \eqref{6.1} and \eqref{6.1d} for few more values of $k$ up to the computable amount and propose Conjecture \ref{conj2}.
\end{remark}
\begin{proof}[Proof of Theorem \ref{Thm5}]
We prove the theorem by induction on $k$. Suppose \eqref{conj2_1} is true for all $k\geq0$. Let \eqref{lin_conj1} hold for some $k$. Then, from \eqref{conj2_1}, we have
\begin{align}\label{7.1}
\sum_{n=0}^{\infty}\text{PDO}_t\left(8\cdot3^{k+1}n\right)q^n&\equiv2^{k+3}\cdot3^{k+3}qf^2_1f^2_2f^2_3f^2_6\nonumber \\
&\equiv2\cdot3\sum_{n=0}^{\infty}\text{PDO}_t\left(8\cdot3^{k}n\right)q^n\pmod{3^{k+4}}.
\end{align}
By our induction hypothesis, the right hand side of \eqref{7.1} is divisible by $3^{k+3}$. Therefore, 
\begin{align*}
\text{PDO}_t(8\cdot3^{k+1}n)\equiv0\pmod{3^{k+3}}.
\end{align*}
Hence, \eqref{lin_conj1} is true for $k+1$. If \eqref{conj2_2} is true, \eqref{lin_conj2} follows in a similar way. This completes the proof.
\end{proof}
%%%%%%%%%%%%%%%%%%%%%%%%%%%%%%%%%
\section{Concluding remark}
By Conjecture \ref{conj2}, we also understand the co-existence of congruences \eqref{lin_conj1} and \eqref{lin_conj2}, since from \eqref{conj2_1} and \eqref{conj2_2}, we have
\begin{align}\label{0.1_coexist}
2^{\alpha_k}f^4_2\cdot\sum_{n=0}^{\infty}\text{PDO}_t\left(8\cdot3^kn\right)q^n\equiv2^{k+2}f^8_1\cdot\sum_{n=0}^{\infty}\text{PDO}_t\left(12\cdot3^kn\right)q^n\pmod{3^{k+2}}.
\end{align} 
It is clear from \eqref{0.1_coexist} that if Conjecture \ref{conj2} is true then \eqref{lin_conj1} holds if and only if \eqref{lin_conj2} holds.
%%%%%%%%%%%%%%%%%%%%%%%%%%%%%%%%%
\section{Acknowledgements}
The second author gratefully acknowledges the Department of Science and Technology, government of India, for the Core Research Grant (CRG/2021/00314) of SERB.

\end{document}